\documentclass[12pt]{amsart}
\usepackage{amsmath,amsthm,amsfonts,latexsym,amssymb,amscd,xcolor}
\usepackage{chemarr}
\usepackage{caption}
\usepackage{mathtools}
\usepackage{tikz}
\newtheorem{thm}{Theorem}[section] %[subsection]

\newtheorem{lm}[thm]{Lemma}

\newtheorem{clm}[thm]{Claim}
\theoremstyle{definition}

\numberwithin{equation}{section}
\newcommand{\cproof}{\noindent{\it Proof of Claim.}\ } %used in spf environment
\newcommand{\cqed}{\hfill\rule{1.3mm}{3mm}}

\newcommand{\wec}[1]{{\mathbf{#1}}}  % notation for vectors in algebras
\newcommand{\m}[1]{{\mathbf{\uppercase{#1}}}}

\DeclareMathOperator{\Con}{Con}

\newcommand{\cg}{\mathrm{Cg}}

\begin{document}

\title[Locally finite Schreier varieties]{Locally finite Schreier varieties}

\author{K.~A.~Kearnes}
\address[Keith A. Kearnes]{Department of Mathematics\\
University of Colorado\\
Boulder, CO 80309-0395\\
USA}
\email{kearnes@colorado.edu}

\author{A.~Moorhead}
\address[Andrew Moorhead]{Institut f{\" u}r Algebra,
TU Dresden}
\email{apmoorhead@gmail.com}  

\author{\'A.~Szendrei}
\address[\'Agnes Szendrei]{Department of Mathematics\\
University of Colorado\\
Boulder, CO 80309-0395\\
USA}
\email{szendrei@colorado.edu}

\thanks{The second author received funding from the ERC
  (Grant Agreement no. 101071674, POCOCOP).
  Views and opinions expressed are however
  those of the authors only and do not necessarily
  reflect those of the European Union or the European Research
Council Executive Agency.}

\subjclass[2010]{Primary: 08B20; Secondary: 08A30}
\keywords{Free algebra, Nielsen variety,
  P\'{a}lfy's Theorem, permutational algebra, Schreier variety,
  Tame Congruence Theory, $\langle 0,1\rangle$-minimal algebra}

\begin{abstract}
A variety $\mathcal{V}$ is called a \emph{Schreier variety}
if every subalgebra of a $\mathcal{V}$-free
algebra is a $\mathcal{V}$-free algebra.
We use ideas from Tame Congruence Theory
to classify locally finite Schreier varieties.
One version of the classification theorem states that a locally
finite variety $\mathcal{V}$
is a Schreier variety if and only
if (i) every finite algebra in $\mathcal{V}$
is a $\langle 0,1\rangle$-minimal algebra
and (ii) if $\mathcal{V}$ has a constant
$1$-ary term operation, then $\mathcal{V}$ also has
a constant $0$-ary term operation.
\end{abstract}

\maketitle

\section{Introduction}\label{Ind_sec}
In 1927, in \cite{schreier}, Otto Schreier
proved that every
subgroup of a free group is free.
This theorem extended a 1921 result of
Jakob Nielsen who proved in
\cite{nielsen}
that every \emph{finitely generated}
subgroup of a free group is free.
Let us say that a variety $\mathcal V$
of algebraic structures
is a \textbf{Schreier variety}
if every subalgebra of a $\mathcal V$-free
algebra is a $\mathcal V$-free algebra, and 
let us say that a variety $\mathcal V$
is a \textbf{Nielsen variety}
if every \emph{finitely generated}
subalgebra of a $\mathcal V$-free algebra is a $\mathcal V$-free algebra.
Since a trivial variety (i.e., a variety satisfying $x\approx y$)
has the property
that all members are free, we may focus our
attention on nontrivial varieties when we discuss
these concepts.

In this paper, we will prove that a locally finite
variety is a 
Schreier variety if and only if it is a Nielsen variety
and we will classify these varieties (Theorem~\ref{main_thm}).
In this theorem, a zeroary (= $0$-ary) operation of a variety
may be called a ``constant'' operation, while a
unary (= $1$-ary) operation that interprets as a constant
unary function throughout the variety
will be called a ``pseudoconstant''.
We shall explain why a variety with at least one pseudoconstant operation
but no constant operation cannot be a Schreier or Nielsen variety,
so such varieties are excluded by hypothesis from our main theorem.
Given this restriction, the main theorem of the paper asserts that
the following four statements are equivalent for any
other locally finite
variety $\mathcal{V}$:
\begin{enumerate}
\item $\mathcal{V}$ is a Schreier variety.
\item $\mathcal{V}$ is a Nielsen variety.
\item Every finite algebra in $\mathcal{V}$ is a
  $\langle 0,1\rangle$-minimal algebra.
\item Either there is a finite group $G$ such that
  every algebra in $\mathcal{V}$ is polynomially
  equivalent to a $G$-set, or
there is a finite field $\mathbb{F}$ such that
  every algebra in $\mathcal{V}$ is polynomially
  equivalent to an $\mathbb{F}$-vector space.
\end{enumerate}

\section{Constants, pseudoconstants, and
  $\m f_{\mathcal V}(\emptyset)$} \label{FV(0)}

The questions we consider (\emph{Which locally finite varieties
are Schreier or Nielsen?}) belong to the subject of
categorical algebra.
From the viewpoint of category theory,
a variety $\mathcal V$ arises from a monad on the category of sets,
hence
has the property
that for any set $X$ there is a free algebra over $X$.
This free algebra over $X$ in $\mathcal{V}$
will be denoted $\m f_{\mathcal{V}}(X)$.
We will write $\m f_{\mathcal{V}}(\kappa)$ to denote any
algebra that is free over some set $X$ satisfying $|X|=\kappa$.
The cardinal $\kappa$ is the called the {\bf rank} of
$\m f_{\mathcal{V}}(\kappa)$.

The subset $X\subseteq F_{\mathcal{V}}(X)$ of free
generators is characterized by the property
that it is 
a $\mathcal{V}$-independent generating subset for
$\m f_{\mathcal{V}}(X)$. If $Y\subseteq X$, then
the subalgebra $\m s\leq \m f_{\mathcal{V}}(X)$ that
is generated by $Y$ will be a
$\mathcal{V}$-independent generating subset for
$\m s$, so $\m s$ will be free of rank $|Y|$ in $\mathcal{V}$.
In the case where $Y=\emptyset$,
the free algebra $\m f_{\mathcal V}(\emptyset)$ must be the initial
object of $\mathcal V$. This algebra
will be empty if the signature of $\mathcal{V}$
contains no $0$-ary operation symbols.
But, it is a common convention in algebra
that every algebra has a nonempty underlying set.
(E.g., see 
\cite[Definition~1.1]{bergman},
\cite[Definition~1.3]{burris}, or
\cite[Definition~1.1]{mmt}.)
This leads to a conflict:
if you exclude empty algebras from varieties,
then you might also exclude initial objects,
in which case you exclude $\m f_{\mathcal V}(\emptyset)$.

Because of the observations of the previous paragraph,
we shall allow algebras to be empty
and we shall allow varieties to contain empty algebras.
Of course, if the signature of the variety contains constant
symbols, then none of the algebras can be empty.
But if the signature of a variety $\mathcal V$ contains no constant
symbols, then there will be an empty algebra in $\mathcal V$ and
its isomorphism type will be unique.
When a variety contains an empty algebra,
then the empty algebra is the only candidate for
the initial object, hence the empty algebra will be
$\m f_{\mathcal V}(\emptyset)$.

The choice we make to admit empty algebras
leads to an odd-sounding conclusion.
If a ``group'' is presented as an
algebra of the form $\langle G; \cdot, {}^{-1},1\rangle$
in the signature of one binary operation,
one unary operation, and one constant, then
Schreier's Theorem is true: subgroups of free groups
are free. But if a ``group'' is instead presented as an 
algebra of the form $\langle G; \cdot, {}^{-1}\rangle$
in the signature of one binary operation and
one unary operation only, then
Schreier's Theorem is false: the $1$-element
group is no longer free, but it arises as a subgroup
of a free group of any nonzero rank.

Let's elaborate on the example in the previous paragraph.
Let $L$ be a first-order algebraic language, 
by which we mean a first-order language with
operation symbols and without relation symbols.
An \textbf{algebra} is an $L$-structure.
A \textbf{variety} is an equationally defined
class of algebras.
A $0$-ary operation symbol of $L$ will be called
\textbf{constant symbol}.
An $L$-term with no free variables will be called 
a \textbf{constant $L$-term}.
If $c$ is a constant $L$-term and $\m a$
is an $L$-algebra, then we may say that
the interpretation $c^{\m a}\in A$ is a \textbf{true constant}
of $\m a$.
A unary $L$-term $u(x)$ is a \textbf{pseudoconstant}
for an algebra $\m a$ if $\m a\models u(x)\approx u(y)$
and is a \textbf{pseudoconstant}
for a variety $\mathcal V$ if ${\mathcal V}\models u(x)\approx u(y)$.
The following assertions are consequences
of the way terms are defined and interpreted:
Let $\mathcal V$ be a variety in the language $L$.
\begin{enumerate}
\item[(1)] If $c$ is a constant $L$-term, then
  there is a pseudoconstant $u(x)$ whose
  interpretation in any $\m a\in {\mathcal V}$
  is a constant unary function with range $\{c^{\m a}\}$.
  \smallskip

\item[(2)] If $u(x)$ is 
  a pseudoconstant for $\mathcal V$
  and $c$ is any constant $L$-term,
  then $u(c)$ is also a constant $L$-term
and for any $\m a\in {\mathcal V}$   we have that
$u^{\m a}(x)$ is a constant unary function
with range $\{u^{\m a}(c^{\m a})\}$.
  \smallskip

\item[(3)] For any $\m a\in {\mathcal V}$ the set of
true constants of $\m a$
is a subuniverse $\m c_{\m a}$ of $\m a$.
The set of elements of $\m a$ belonging to the range
of a pseudoconstant is
a subuniverse $\m p_{\m a}$ of $\m a$.
  \smallskip

\item[(4)]
$\m c_{\m a}\subseteq \m p_{\m a}$ for all $\m a\in {\mathcal V}$.
  If $L$ contains at least one constant symbol, then
$\m c_{\m a}=\m p_{\m a}$  for all $\m a\in {\mathcal V}$.
  If $L$ does not contain a constant symbol, then
  $\m c_{\m a}=\emptyset$ while
  $\m p_{\m a}$  may or may not be empty.
  \smallskip

\item[(5)]
  If $X\neq \emptyset$ and
  the free algebra $\m f=\m f_{\mathcal V}(X)$ is nontrivial,
  then for any $x\in X$ we have
  $x\notin \m p_{\m f}$.
  (If $x=u(x)$ for some pseudoconstant $u(x)$ of $\m f$, then
  $\m f\models x\approx u(x)\approx u(y)\approx y$ contradicting
  the nontriviality of $\m f$.)
\end{enumerate}
  \smallskip

If $\mathcal V$ is a Schreier or Nielsen variety, then
for any nontrivial free algebra $\m f = \m f_{\mathcal V}(X)$ with
$X\neq\emptyset$,
the subalgebra $\m p_{\m f}$ will be a finitely generated
subalgebra of a free algebra, hence $\m p_{\m f}$ will be free
in $\mathcal{V}$.
By Item (5) above, no free algebra of positive rank
can consist entirely of elements in
the ranges of pseudoconstants,
so $\m p_{\m f}$ must be free of rank zero.
If the language contains no constant symbols,
then $\m f_{\mathcal V}(\emptyset)$ will be empty.
This shows that, in a Schreier or Nielsen variety,
if $\m c_{\m f}=\emptyset$, then $\m p_{\m f}=\emptyset$.
Applying Item~(4) above, this means that if $\mathcal{V}$
is a Schreier or Nielsen variety, then $\m c_{\m f}=\m p_{\m f}$
for every free algebra $\m f\in \mathcal{V}$.
Equivalently, 

\begin{quote}{\it
  If $\mathcal V$ is a Schreier or Nielsen variety in a language with
no constant symbols, then $\mathcal V$ has no pseudoconstants.}
\end{quote}

Using Item (2) above, we may restate this in a stronger way as

\begin{quote}
{\it If $\mathcal V$ is a Schreier or Nielsen variety, then
for any pseudoconstant $u(x)$ of $\mathcal V$
there is a constant term $c$ such that for every $\m a\in \mathcal V$
the function 
$u^{\m a}(x)$ is constant with range $\{c^{\m a}\}$.}
\end{quote}

The discussion of this section concludes
now with the declaration that the questions
we consider (\emph{Which locally finite varieties
are Schreier or Nielsen?}) are uninteresting for
varieties that have pseudoconstants
but no constants, because
(under our conventions) no Schreier or Nielsen
variety has a pseudoconstant 
unless it also has a constant.
Varieties with pseudoconstants 
but no constants will be excluded
by hypothesis in our main theorem.
(For example, the variety of groups
in the signature $\{\cdot, {}^{-1}\}$
of one binary operation and one unary operation
would be excluded by this hypothesis, since $x\cdot x^{-1}$
is a pseudoconstant for this variety, but this
language has no constants.)

\section{Free algebras in
  $\langle 0,1\rangle$-Minimal Varieties} \label{decomp}

In \cite{palfy}, P\'{a}lfy calls
an algebra $\m a$ \textbf{permutational}
if every unary polynomial operation of $\m a$ is a permutation
of the universe $A$ or is a constant operation.
Finite permutational algebras
are the fundamental objects in the structure
theory of finite algebras
that is known as Tame Congruence Theory, \cite{HM},
where they are also called \textbf{$\langle 0,1\rangle$-minimal algebras}
or just \textbf{minimal algebras}
(see Definition~2.14 of \cite{HM}).

It follows from the main theorem of
\cite{palfy} (or from the material of Chapter~4 of \cite{HM}) that
a (finite) $\langle 0,1\rangle$-minimal
algebra is one of the following types of algebras:
\smallskip

\begin{enumerate}
\item[\textbf{1}.] an algebra polynomially equivalent to a faithful $G$-set
for some finite group $G$,
\smallskip

\item[\textbf{2}.] an algebra polynomially equivalent to an
  $\mathbb F$-vector space for some finite field $\mathbb F$, 
  \smallskip

\item[\textbf{3}.] an algebra polynomially equivalent
  to a $2$-element Boolean algebra,
  \smallskip

\item[\textbf{4}.] an algebra polynomially equivalent
  to a $2$-element lattice, or
  \smallskip

\item[\textbf{5}.] an algebra polynomially equivalent
  to a $2$-element semilattice.
\end{enumerate}
\smallskip

\noindent
Here, we use the numbering system $\mathbf{1}$-$\mathbf{5}$ for
`types' that comes from Tame Congruence Theory.
In this paper, the last three types will
be handled by the same arguments, so we do not
need to distinguish between them. Instead,
we will refer to types $\mathbf{1}$ and $\mathbf{2}$
(the abelian types) and types $\mathbf{3}$-$\mathbf{5}$
(the nonabelian types).

We will be examining locally finite varieties $\mathcal V$
whose finite members are $\langle 0,1\rangle$-minimal.
This means that ${\mathcal V}_{\textrm{fin}}$,
the class of finite algebras in $\mathcal{V}$,
consists exclusively of algebras of types $\mathbf{1}$, $\mathbf{2}$,
or $\mathbf{3}$-$\mathbf{5}$.
In this paper, we will call a
variety $\mathcal{V}$ a \textbf{permutational variety}, or a
\textbf{$\langle 0,1\rangle$-minimal variety},
if all of its finite members are
permutational (=$\langle 0,1\rangle$-minimal).
For such varieties there is a uniformizing principle
with regard to the types of algebras
that can appear in ${\mathcal V}_{\textrm{fin}}$.
The principle is based
on the observation that if $\m a, \m b\in {\mathcal V}_{\textrm{fin}}$
are nontrivial, then 
since $\m a\times \m b\in {\mathcal V}_{\textrm{fin}}$
must be of type $\mathbf{1}$, $\mathbf{2}$, or $\mathbf{3}$-$\mathbf{5}$ and 
$\m a$ and $\m b$ are quotients of $\m a\times \m b$,
therefore some relationships must hold
between the type and structure of $\m a$
and the type and structure of $\m b$.
By comparing the possible types and structures of the algebras
$\m a, \m b$, and $\m a\times \m b$ we learn the following:
\smallskip

\begin{enumerate}
\item If $\m a\in {\mathcal V}_{\textrm{fin}}$ is nontrivial and of types
$\mathbf{3}$-$\mathbf{5}$, then since $|A|=2$ we have
that $|A\times A|=4$. Since we also have
$\m a\times \m a\in {\mathcal V}_{\textrm{fin}}$, we derive
that, since $\m a\times \m a$ has more than two elements,
it must be of
type $\mathbf{1}$ or $\mathbf{2}$. But $\m a\times \m a$ will have type
$\mathbf{1}$ iff $\m a$ does and $\m a\times \m a$ will have type
$\mathbf{2}$ iff $\m a$ does. So, it is impossible for $\m a\times\m a$
to have types $\mathbf{3}$-$\mathbf{5}$ (nonabelian types) and also
type $\mathbf{1}$ or type $\mathbf{2}$ (abelian types).
The conclusion is that ${\mathcal V}_{\textrm{fin}}$
cannot contain any algebras of types $\mathbf{3}$-$\mathbf{5}$.
\smallskip

\item If ${\mathcal V}_{\textrm{fin}}$ contains a nontrivial algebra $\m a$
  of type {\bf 1} and a nontrivial algebra $\m b$
  of type {\bf 2}, then $\m a\times \m b\in {\mathcal V}_{\textrm{fin}}$
  will not be of any of the types we have enumerated.
  Thus
  ${\mathcal V}_{\textrm{fin}}$
  consists exclusively of algebras of type {\bf 1}
  or exclusively of algebras of type {\bf 2}.
\smallskip

\item
Assume that ${\mathcal V}_{\textrm{fin}}$
consists exclusively of algebras of type {\bf 2},
and that $\m a, \m b\in {\mathcal V}_{\textrm{fin}}$ are nontrivial.
Since $\m a\times \m b\in {\mathcal V}_{\textrm{fin}}$
must be of type {\bf 2},
$\m a\times \m b$ is polynomially equivalent to an
$\mathbb F$-vector space for some finite field $\mathbb{F}$,
so both quotients $\m a$ and $\m b$ of
$\m a\times \m b$ must be polynomially equivalent to
vector spaces over the same finite field. Thus, in type {\bf 2},
there is a single finite field $\mathbb F$ such that
all members of ${\mathcal V}_{\textrm{fin}}$
are polynomially equivalent to vector spaces over $\mathbb F$.
\smallskip

\item
By a similar argument, if all members of
${\mathcal V}_{\textrm{fin}}$ are of type {\bf 1},
then there is a single group $G$
such that all members of 
${\mathcal V}_{\textrm{fin}}$ are polynomially equivalent
to $G$-sets. In this case, $\mathcal V$ is an essentially unary
variety so an upper bound on the size of the group $G$ is
$|\m f_{\mathcal V}(1)|$, which is finite. Thus,
when all members of
${\mathcal V}_{\textrm{fin}}$ are of type {\bf 1},    
there is a single finite group $G$ such that
all members of ${\mathcal V}_{\textrm{fin}}$ are
polynomially equivalent
  to $G$-sets. 
\end{enumerate}  
\smallskip

The previous observations explain why, if $\mathcal{V}$
is a locally finite, $\langle 0,1\rangle$-minimal variety,
then either there is a finite group $G$ such that
all finite algebras in $\mathcal{V}$ are polynomially equivalent
to $G$-sets or there is a finite field $\mathbb{F}$
such that
all finite algebras in $\mathcal{V}$ are polynomially equivalent
to $\mathbb{F}$-vector spaces. The next lemma extends
these conclusions to the infinite members
of $\mathcal{V}$, as well.

\begin{lm} \label{infinite}
Let $\mathcal{V}$ be a locally finite variety.  
\begin{enumerate}
\item If there is a finite group $G$ such that
  every member of $\mathcal{V}_{\textrm{fin}}$ is polynomially
  equivalent to a $G$-set, then every member of $\mathcal{V}$
is polynomially
  equivalent to a $G$-set. 
\item If there is a finite field $\mathbb{F}$ such that
  every member of $\mathcal{V}_{\textrm{fin}}$ is polynomially
  equivalent to an $\mathbb{F}$-vector space,
  then every member of $\mathcal{V}$
is polynomially
  equivalent to an $\mathbb{F}$-vector space.
\end{enumerate}
\end{lm}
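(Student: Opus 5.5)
The plan is to transfer the polynomial-equivalence description from finite to arbitrary members by encoding it in identities and first-order properties of the free algebras $\m f_{\mathcal V}(n)$, which are finite by local finiteness, and then using that every algebra in $\mathcal V$ is a directed union of its finite subalgebras. The first step is to make the hypothesis concrete on a large finite free algebra. Being polynomially equivalent to a $G$-set means: every basic operation $f(x_1,\dots,x_n)$ of $\m a$ satisfies either $f(\wec x)=g\cdot x_i$ for a fixed $i$ and $g\in G$, or $f$ is constant. Being polynomially equivalent to an $\mathbb F$-vector space means: there is a Maltsev term $p$ with $p(x,y,z)=x-y+z$, each basic operation is affine, $f(\wec x)=\sum \lambda_i x_i + c$, and the scalars $\lambda_i$ come from a copy of $\mathbb F$. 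The key point is that in the finite free algebra $\m f_{\mathcal V}(k)$, with $k$ at least the relevant arities, these descriptions hold with particular term-definable data. So they become identities of $\mathcal V$, and these identities then hold in all algebras of $\mathcal V$, including infinite ones.

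For case (2), I would argue as follows. Since $\m f_{\mathcal V}(3)$ is polynomially equivalent to an $\mathbb F$-vector space, the ternary polynomial $x-y+z$ is a polynomial of it. I would rather extract a term: the term $p(x,y,z)$ of $\m f_{\mathcal V}(3)$ that is a Maltsev operation. Indeed, $\m f_{\mathcal V}(3)$ is abelian and has a Maltsev polynomial, so $\mathcal V_{\textrm{fin}}$ generates a congruence permutable variety, and hence $\mathcal V$ has a Maltsev term. The same holds for $\mathcal V$ itself, since $\mathcal V$ is generated by its finite free algebras. Likewise the term condition (abelianness) is a family of quasi-identities checked in the finite free algebras of arbitrarily large rank. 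It therefore holds in all of $\mathcal V$, because every finitely many elements of an algebra $\m a\in\mathcal V$ lie in a finite subalgebra. So every $\m a\in\mathcal V$ is abelian with a Maltsev term, and by Herrmann/Gumm's theory (the fundamental theorem of abelian algebras in congruence modular varieties) $\m a$ is affine: polynomially equivalent to a module over the ring $R$ of binary idempotent terms, modulo the usual construction. It remains to see that this ring acts as $\mathbb F$. The ring of the module is determined by $\m f_{\mathcal V}(2)$, which is finite and is an $\mathbb F$-vector space, so the same ring $\mathbb F$ arises. I expect this identification to be the main obstacle. One has to check that the polynomial clone of $\m a$ is exactly the affine clone over $\mathbb F$, namely that every scalar multiplication $x\mapsto \lambda x$ (with an origin fixed) is a polynomial, and conversely that no other operations occur. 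I would get the scalars from binary terms $t(x,y)$ acting as $(1-\lambda)x+\lambda y$ in $\m f_{\mathcal V}(2)$, identities which transfer to all of $\mathcal V$.

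For case (1), the approach is analogous but easier. Every $n$-ary term of $\m f_{\mathcal V}(n)$ depends on at most one variable, since a permutational $G$-set polynomial clone has only essentially unary or constant operations. So $\mathcal V$ satisfies identities expressing that each basic operation is either $u(x_i)$ for a unary term $u$, or constant (constant ones reduce to a pseudoconstant, or to a constant term in the presence of constants). Hence every $\m a\in\mathcal V$ is term equivalent to a unary algebra, possibly with constants, whose unary terms form the monoid of unary terms of $\m f_{\mathcal V}(1)$ together with constant functions. The nonconstant unary terms act as permutations forming a quotient of $G$ on each finite algebra. The identity $u^{|G|!}(x)\approx x$ holds for nonconstant $u$, and it transfers to all of $\mathcal V$, so they act as a group image of $G$. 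Adding all elements as constants then shows that $\m a$ is polynomially equivalent to a $G$-set.
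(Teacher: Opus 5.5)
Your proposal is correct and follows essentially the paper's route. In (1) you show terms are essentially unary on the finite free algebras and transfer one-variable identities from $\m f_{\mathcal V}(1)$; your $u^{|G|!}(x)\approx x$ plays the role of the paper's $u_1(x)\approx x$ and $u_g(u_h(x))\approx u_{gh}(x)$. In (2) you take a Maltsev term from $\m f_{\mathcal V}(3)$, obtain abelianness of every member by local finiteness, apply Herrmann's theorem, and identify the ring as $\mathbb F$ via $\m f_{\mathcal V}(2)$.

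One point to tidy: the term condition should come from the hypothesis on all finite members rather than being ``checked in the finite free algebras'', since quasi-identities need not pass to homomorphic images. Your argument through finite subalgebras already does exactly this.
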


\begin{proof}
For Item~(1), the hypothesis that   
there is a finite group $G$ such that
  every member of $\mathcal{V}_{\textrm{fin}}$ is polynomially
  equivalent to a $G$-set implies that every
$\mathcal{V}$-term depends on at most one variable
  on each algebra of the
  class $\mathcal{V}_{\textrm{fin}}$.
  This class contains the finitely generated free
  algebras of $\mathcal{V}$, hence it follows that
every $\mathcal{V}$-term depends on at most one variable
on any algebra in $\mathcal{V}$.
By changing the language, if necessary, we may
assume that $\mathcal{V}$ is a unary variety and
that there is one basic operation
$u(x)$ for each element $u$ of the finite
algebra $\m f_{\mathcal{V}}(x)$.
Any operation $u(x)$ that does not depend on its variable
on $\m f_{\mathcal{V}}(x)$,
must be a constant of $\m f_{\mathcal{V}}(x)$,
hence will satisfy $u(x)\approx u(y)$ in
$\m f_{\mathcal{V}}(x)$, hence will satisfy $u(x)\approx u(y)$
in $\mathcal{V}$.
Since $\m f_{\mathcal{V}}(x)$ is polynomially equivalent
to a $G$-set, any $u(x)$ that depends on its variable
on $\m f_{\mathcal{V}}(x)$,
must be a permutation $u(x)=u_g(x)$
of the universe of $\m f_{\mathcal{V}}(x)$
that arises from the action of an element $g$ of $G$.
Hence, for the identity element $1$ of $G$ and for
any elements $g,h\in G$ the identities
\begin{equation*}
  u_1(x)\approx x\quad\text{and}\quad u_g(u_h(x))\approx u_{gh}(x)
\end{equation*}
will be satisfied on
the $1$-generated free algebra
$\m f_{\mathcal{V}}(x)$, and since these are $1$-variable
identities they will be satisfied throughout the variety
$\mathcal{V}$.
Altogether, we see that $\mathcal{V}$
is term equivalent to a variety
whose operations
are unary and consist of a finite set of constant functions
and an action of $G$ 
on any algebra in $\mathcal{V}$.
Thus, every algebra in $\mathcal{V}$
is polynomially equivalent to a $G$-set.

For Item~(2), the hypothesis that   
there is a finite field $\mathbb{F}$ such that
  every member of $\mathcal{V}_{\textrm{fin}}$ is polynomially
  equivalent to an $\mathbb{F}$-vector space
  implies that the finite algebra
  $\m f_{\mathcal{V}}(x,y,z)$
  is polynomially
  equivalent to an $\mathbb{F}$-vector space.
  This algebra will have a ternary Maltsev
  term operation $x-y+z$ (see Exercise~2.8 of \cite{clones}),
  and any Maltsev term operation for
  $\m f_{\mathcal{V}}(x,y,z)$ will be a
  Maltsev term operation for $\mathcal{V}$.
  Any failure of the term condition in any
  algebra $\m a\in \mathcal{V}$ involves finitely
  many elements of $\m a$, hence will be a failure
  of the term condition in some finitely generated
  subalgebra $\m b\leq \m a$. But since
  $\m b\in \mathcal{V}_{\textrm{fin}}$,
  $\m b$ must be polynomially equivalent to a vector
  space, hence will not fail the term condition.
  It follows that every algebra in $\mathcal{V}$
  satisfies the term condition,
  hence $\mathcal{V}$ is an abelian Maltsev variety.
  The main result of \cite{herrmann} implies that 
  such varieties are affine.
  This means that there
  is some ring $\m r$ such that every
  algebra in $\mathcal{V}$ is polynomially
  equivalent to a left $\m r$-module, and
  $\m r$ acts faithfully on $\m f_{\mathcal{V}}(x,y)$.
  The ring $\m r$ is completely determined
  by the algebra structure on 
  $\m f_{\mathcal{V}}(x,y)$ in the following sense:
  if $\m f_{\mathcal{V}}(x,y)$ is polynomially
  equivalent to a faithful
  left $\m r$-module and also to a faithful left $\m s$-module,
  then $\m r\cong \m s$.
Summarizing, we have that
  every member of $\mathcal{V}_{\textrm{fin}}$ is polynomially
  equivalent to an $\mathbb{F}$-vector space,
  every member of $\mathcal{V}$ is polynomially
  equivalent to an $\m r$-module, and
  these classes overlap nontrivially in algebras
  where both $\mathbb{F}$ and $\m r$
  act faithfully. By the uniqueness of the
ring of an affine variety, we must have $\m r\cong \mathbb{F}$.
\end{proof}

For our main theorem, Theorem~\ref{main_thm}, we will want to know
the structure of the subalgebras of free algebras
in locally finite, $\langle 0,1\rangle$-minimal varieties.
This information is contained
in the next two lemmas.

\begin{lm} \label{Gset_lm}
  \emph{(The type {\bf 1} case.)}
Let $\mathcal V$ be a nontrivial locally finite variety
satisfying the hypothesis that there is some
finite group $G$ such that all members of $\mathcal{V}$ are
polynomially equivalent to $G$-sets.
If $\mathcal V$ has a pseudoconstant, then
assume that $\mathcal V$ also has a
constant.

Let $\m C_{\m f}$ be the subuniverse of
$\m f_{\mathcal V}(\kappa)$ consisting of the true
constants of $\m f_{\mathcal{V}}(\kappa)$.
The term reduct of $\m f_{\mathcal V}(\kappa)$
to the language of $G$-sets 
is a disjoint union of $G$-subalgebras
\[\m c_{\m f}\sqcup \m g(\kappa)\]
where $\m g(\kappa)$ is a free $G$-set of rank $\kappa$.
Any $\mathcal V$-subalgebra 
$\m s\leq \m f_{\mathcal V}(\kappa)$ has the form
$\m c_{\m f}\sqcup {\m s}'$ for some
$G$-subalgebra ${\m s}'$ of $\m g(\kappa)$.

In particular, $\mathcal V$ is a Schreier variety.
\end{lm}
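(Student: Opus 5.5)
We follow the proof of Lemma~\ref{infinite}(1) and replace $\mathcal V$ by a term equivalent unary variety. Its basic operations are the pseudoconstants $u(x)$ (with $\mathcal V\models u(x)\approx u(y)$) and the operations $u_g(x)$, $g\in G$, which satisfy $u_1(x)\approx x$ and $u_g(u_h(x))\approx u_{gh}(x)$. By hypothesis and the discussion of Section~\ref{FV(0)} (Item~(2)), every pseudoconstant agrees throughout $\mathcal V$ with a true constant $c$. So $\mathcal V$ is term equivalent to a variety whose operations are a set of $0$-ary constants together with a $G$-action. Subalgebras and freeness do not change under term equivalence, so we may assume $\mathcal V$ has exactly this signature.

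Next we describe $\m f=\m f_{\mathcal V}(X)$ with $|X|=\kappa$. Every element is the value of a term. A term is either a constant term, whose value lies in $\m c_{\m f}$, or it has the form $g\cdot x$ with $x\in X$ and $g\in G$. Hence $F=\m c_{\m f}\cup G X$. Both pieces are closed under the $G$-action: in the unary-plus-constants signature, $g\cdot c$ is again a constant term, and $h\cdot(g x)=(hg)x$.

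We then need three facts: $GX\cap \m c_{\m f}=\emptyset$, the translates $g x$ are pairwise distinct, and $GX$ is a free $G$-set on $X$. We check them with specific algebras in $\mathcal V$. If $gx=c$ in $\m f$, then $\mathcal V\models g x\approx c$, and applying $g^{-1}$ gives $x\approx g^{-1}c$. This makes $x$ a pseudoconstant, which forces triviality, contradicting Item~(5). If $gx=hy$ with $x\neq y$, then $\mathcal V\models x\approx g^{-1}h y$, which again makes $\mathcal V$ trivial after substituting.

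The hardest point is showing $gx\neq hx$ for $g\neq h$, i.e., that $G$ acts freely on $X$. Our plan is to use the faithful action of $G$ on $\m f_{\mathcal V}(x)$ from Lemma~\ref{infinite}: each $u_g$ was defined as a distinct element of $\m f_{\mathcal V}(x)$, namely $u_g(x)$, so $g x\neq h x$ there. Because the map $\m f_{\mathcal V}(x)\to\m f$ sending $x$ to $x$ is an embedding (it is the subalgebra generated by $\{x\}$, which is free), the inequality transfers to $\m f$. Thus the $G$-reduct of $\m f$ is $\m c_{\m f}\sqcup\m g(\kappa)$ with $\m g(\kappa)=GX$ free on $X$.

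For a subalgebra $\m s\le\m f$, $\m s$ contains every constant, so $\m s\supseteq\m c_{\m f}$ and $\m s=\m c_{\m f}\sqcup\m s'$ with $\m s'=S\cap GX$ a $G$-subset of $\m g(\kappa)$. A $G$-subset of a free $G$-set is a union of free orbits $Gs$. Choose one representative from each orbit to get a set $Y\subseteq\m s'$. It remains to show that $\m s$ is free on $Y$. The assignment $Y\ni y\mapsto y$ and $Y\ni y\mapsto x_y$ gives an isomorphism $\m s\cong \m c_{\m f}\sqcup GY\cong\m f_{\mathcal V}(Y)$: constants match constants, and free orbits match free orbits. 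By the description of $\m f_{\mathcal V}(Y)$ just proved, this is an isomorphism of $\mathcal V$-algebras, because the only operations are the constants and the $G$-action. Hence $\m s$ is free of rank $|Y|$, and $\mathcal V$ is a Schreier variety.
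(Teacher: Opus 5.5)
Your proposal is correct and follows essentially the same route as the paper. It uses the same reduction (via Lemma~\ref{infinite}) to a signature of constants plus a $G$-action, the same decomposition $F=\m c_{\m f}\sqcup\bigsqcup_{x\in X}Gx$, and the same identity-based disjointness arguments; the only difference is the last step, where the paper observes that $S\cap Gx\neq\emptyset$ forces $x\in S$, so $\m s$ is generated by the subset $S\cap X$ of the free generators and is free for that reason alone, which makes your explicit isomorphism with $\m f_{\mathcal V}(Y)$ unnecessary once you take $Y=S\cap X$.
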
  

\begin{proof}
As noted in the first paragraph
of the proof of Lemma~\ref{infinite},
$\mathcal{V}$ is an essentially unary variety.
Also, if $\mathcal{V}$ has a unary constant operation, $u(x)$,
then our hypothesis about pseudoconstants
implies that there is a constant $c$ such that
$u^{\m a}(A) = \{c^{\m a}\}$ for every $\m a\in \mathcal{V}$.
Thus, by changing to a
term equivalent variety if necessary,
we may assume that the fundamental operations of $\mathcal{V}$
consist of a group $G = \{1, g_2, \ldots,g_n\}$
of unary operations and a set $C = \{c_1,\ldots,c_k\}$
of zeroary operations, and we may assume that all elements
of $G\cup C$ interpret as distinct operations in any generic
algebra of $\mathcal{V}$. Notice that $G\cup C$
may be considered to be
a representative set of unary term operations
for $\mathcal{V}$, provided we treat any $c\in C$
as also representing a unary pseudoconstant $u(x)$
for which $u^{\m a}(x)=c^{\m a}$ for any $\m a\in \mathcal{V}$. 

Let $X_{\kappa} = \{x_{\alpha}\;|\;\alpha<\kappa\}$
be a set of free generators for $\m f_{\mathcal{V}}(\kappa)$.
Since $\m f_{\mathcal{V}}(\kappa)$ is generated by 
$X_{\kappa}$ under the unary term operations, we get that
$\m f = \m f_{\mathcal{V}}(\kappa)$ is a union
$F=\{c^{\m f}\;|\;c\in C\}\cup \bigcup_{\alpha<\kappa} G\cdot x_{\alpha}$
where the first summand consists of the interpretations of the constants
in $\m f$ and each succeeding summand is the $G$-orbit
of some free generator $x_{\alpha}\in X_{\kappa}$.

\begin{center}
\setlength{\unitlength}{.5mm}
\begin{picture}(160,60)
\thicklines
\put(0,10){\line(1,0){40}}
\put(0,10){\line(0,1){30}}
\put(40,40){\line(-1,0){40}}
\put(40,40){\line(0,-1){30}}

\put(60,10){\line(1,0){25}}
\put(60,10){\line(0,1){45}}
\put(85,55){\line(-1,0){25}}
\put(85,55){\line(0,-1){45}}

\put(100,10){\line(1,0){25}}
\put(100,10){\line(0,1){45}}
\put(125,55){\line(-1,0){25}}
\put(125,55){\line(0,-1){45}}

\put(140,30){$\bullet$}
\put(150,30){$\bullet$}
\put(160,30){$\bullet$}

\put(-3,0){$\{c^{\m F} \mid c\in C\}$}
%\put(15,0){$\m C_{\m f}$}
\put(62,0){$G\cdot x_0$}
\put(102,0){$G\cdot x_1$}

\put(62,15){$1(x_0)$}
\put(70,30){$\vdots$}
\put(61,45){$g_n(x_0)$}

\put(102,15){$1(x_1)$}
\put(110,30){$\vdots$}
\put(101,45){$g_n(x_1)$}

\put(5,30){$c_0$}
\put(28,30){$c_1$}
\put(15,17){$\cdots$}
\end{picture}
\medskip

\captionof{figure}{The algebra $\m f = \m f_{\mathcal{V}}(\kappa)$} \label{fig1}
\end{center}

It is not hard to check that
\begin{itemize}
\item Each constant
  operation
  maps all elements of
  $F=\{c^{\m f}\;|\;c\in C\}\cup \bigcup_{\alpha<\kappa} G\cdot x_{\alpha}$
  into the first summand, $\{c^{\m f}\;|\;c\in C\}$.
\item Each summand,    $\{c^{\m f}\;|\;c\in C\}$ or $G\cdot x_{\alpha}$,
  is closed under the action of the group $G$.
\item Summands containing a free generator are regular $G$-orbits.
\item The expression
  \[F=\{c^{\m f}\;|\;c\in C\}\cup \bigcup_{\alpha<\kappa} G\cdot x_{\alpha}
=\{c^{\m f}\;|\;c\in C\}\sqcup \bigsqcup_{\alpha<\kappa} G\cdot x_{\alpha}
  \]
  is a disjoint union decomposition.
\end{itemize}
We discuss only the last bullet point.
We must explain why, if $x_{\alpha}\neq x_{\beta}\in X_{\kappa}$, then
$G\cdot x_{\alpha}$ is disjoint
from both (i) $\{c^{\m f}\;|\;c\in C\}$ and (ii) $G\cdot x_{\beta}$.
For (i), if $G\cdot x_{\alpha} \cap \{c^{\m f}\;|\;c\in C\}$
is nonempty, then for some $g_i\in G$ we have
$g_i^{\m f}(x_{\alpha})=c^{\m f}$ for some $c\in C$.
This leads to $\mathcal{V}\models g_i(x_{\alpha})\approx c$,
hence $g_i$ is a pseudoconstant of $\mathcal{V}$.
This contradicts the fact that every unary term operation
in $G$ is a permutation of finite order throughout $\mathcal{V}$.
For (ii), if $G\cdot x_{\alpha} \cap G\cdot x_{\beta}$ is nonempty for some
$x_{\alpha}\neq x_{\beta}$, then there exist $g_i, g_j\in G$ such that
$g_i^{\m f}(x_{\alpha})=g_j^{\m f}(x_{\beta})\in G\cdot x_{\alpha} \cap G\cdot x_{\beta}$.
Since $x_{\alpha}$ and $x_{\beta}$ are free generators of $\m f$,
this leads to  $\mathcal{V}\models g_i(x_{\alpha})\approx g_j(x_{\beta})$,
or $\mathcal{V}\models g_j^{-1}g_i(x_{\alpha})\approx x_{\beta}$.
From this and variable specialization, we deduce that
$\mathcal{V}\models g_j^{-1}g_i(x_{\alpha})\approx x_{\alpha}$.
Then, we deduce that
$\mathcal{V}\models x_{\alpha}\approx g_j^{-1}g_i(x_{\alpha})\approx x_{\beta}$,
hence $\mathcal{V}\models x_{\alpha}\approx x_{\beta}$.
Since $x_{\alpha}\neq x_{\beta}$, this forces
$\mathcal{V}$ to be a trivial variety, which
is contrary to one of the assumptions
of this lemma.

The first two bullet points of the preceding paragraph
imply that the set $\{c^{\m f}\;|\;c\in C\}$ of true
constants in $\m f$ is a subuniverse.
The subalgebra supported by this set is the subalgebra we called
$\m c_{\m f}$ in the statement of the lemma.
The fourth bullet point of the preceding paragraph
implies that, for $\m g(\kappa):=\bigcup_{\alpha<\kappa} G\cdot x_{\alpha}$,
we have that
$\m f = \m c_{\m f}\sqcup \m g(\kappa)$
is a disjoint union. The second bullet point of the preceding paragraph
implies that both $\m c_{\m f}$ and $\m g(\kappa)$
are $G$-subalgebras of $\m f$. The third
bullet point of the preceding paragraph suffices to imply that
$\m g(\kappa)$
is a free $G$-set over
$X_{\kappa}$.

Let $\m s\leq \m f$ be a subalgebra. $\m s$ must be closed under
the constant operations of $\mathcal{V}$,
so $\m c_{\m f}\leq \m s$. If $\m s$ intersects
some orbit $G\cdot x_{\alpha}$ of $\m g(\kappa)$,
then $\m s$ must contain the entire orbit
since any element of the orbit generates
all other elements under $G$.
This means that $S\cap G\cdot x_{\alpha}\neq \emptyset$ iff
$x_{\alpha}\in S$. It follows that
$\m s$ is generated as a subalgebra of $\m f$
by the subset $S\cap X_{\kappa}$ of the free generating
set $X_{\kappa}$.
Any subalgebra of a free algebra that
is generated by a subset of the free generating set is free
itself over that subset, so $\m s$ is free in $\mathcal{V}$
over $S\cap X_{\kappa}$. This explains why
a subalgebra of a free algebra in $\m v$ is free,
and also indicates why
$\m s = \m c_{\m f}\sqcup {\m s}'$ for the
$G$-subalgebra ${\m s}'$ of $\m g(\kappa)$ that is generated by
$S\cap X_{\kappa}$. 
\end{proof}

We remark that if $\mathcal{V}$ has no constant
operations in the previous lemma, then
$\mathcal{V}$ is term equivalent to the variety
of $G$-sets for some finite group $G$.

\begin{lm} \label{Vect_lm}
  \emph{(The type {\bf 2} case.)}
Let $\mathcal V$ be a nontrivial locally finite variety
satisfying the hypothesis that there is some
finite field $\mathbb{F}$ such that all members of $\mathcal{V}$ are
polynomially equivalent to $\mathbb{F}$-vector spaces.
If $\mathcal V$ has a pseudoconstant, then
assume that $\mathcal V$ also has a
constant.

\begin{enumerate}
\item \emph{(The case where $\mathcal V$ does not have a pseudoconstant.)}
Let $\mathcal U$ be the idempotent subvariety of $\mathcal V$.
(This is the subvariety of $\mathcal V$
axiomatized relative to $\mathcal V$ by all identities of the form
$u(x)\approx x$ where $u(x)$ is a unary term.)
For any cardinal $\kappa$,
$\m f_{\mathcal V}(\kappa)\cong \m f_{\mathcal U}(\kappa)\times 
\m f_{\mathcal V}(1)$. Moreover, any subalgebra
$\m s\leq \m f_{\mathcal V}(\kappa)$ is isomorphic
to an algebra of the form $\m s'\times \m f_{\mathcal V}(1)$
for some $\m s'\leq \m f_{\mathcal U}(\kappa)$.
\item \emph{(The case where $\mathcal V$ has a constant.)}
Let $\mathcal U$ be the subvariety of $\mathcal V$
defined by all identities of the form
$c\approx d$ whenever $c$ and $d$ are constants of $\mathcal{V}$.
For any cardinal $\kappa$,
$\m f_{\mathcal V}(\kappa)\cong \m f_{\mathcal U}(\kappa)\times 
\m f_{\mathcal V}(0)$. Moreover, any subalgebra
$\m s\leq \m f_{\mathcal V}(\kappa)$ is isomorphic
to an algebra of the form ${\m s}'\times \m f_{\mathcal V}(0)$
for some ${\m s}'\leq \m f_{\mathcal U}(\kappa)$.
\end{enumerate}

In either of Cases~(1) or~(2), $\mathcal V$ is a Schreier variety.
\end{lm}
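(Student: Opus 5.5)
The plan is to realize the product decomposition through two canonical homomorphisms out of $\m f=\m f_{\mathcal V}(\kappa)$, and then transport the decomposition to subalgebras using the Maltsev term $p(x,y,z)=x-y+z$ from the proof of Lemma~\ref{infinite}. I use the affine structure throughout. By Lemma~\ref{infinite} and its proof, $\mathcal V$ is an abelian Maltsev variety whose ring is $\mathbb F$. So for any term and any chosen ``origin'' $o$ we can write
\[t(x_1,\dots,x_n)=t(o,\dots,o)+\textstyle\sum_i r_i(x_i-o)\]
with uniquely determined $r_i\in\mathbb F$. In Case~(1) the origin is a generator $x_0$; in Case~(2) it is a constant $c$.

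\textbf{Step 1 (Case~(1) decomposition).} Let $\pi\colon \m f\to\m f_{\mathcal U}(\kappa)$ be the natural quotient map. Let $\sigma\colon \m f\to \m f_{\mathcal V}(1)=\m f_{\mathcal V}(x)$ be the homomorphism sending every free generator to $x$. I will show that $(\pi,\sigma)$ is an isomorphism onto $\m f_{\mathcal U}(\kappa)\times\m f_{\mathcal V}(1)$.

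For surjectivity, take a pair $(s(\bar x),u(x))$ with $s$ representing an element of $\m f_{\mathcal U}(\kappa)$. The term $p(s(\bar x),x_0,u(x_0))$ maps to $s-x_0+x_0=s$ in $\mathcal U$, because $u(x_0)\approx x_0$ there. Under $\sigma$ it maps to $u(x)-x+u(x)$; this needs correcting. Instead I use $p(s(\bar x),s(x_0,\dots,x_0),u(x_0))$, which gives $s$ in $\mathcal U$ and $u(x)$ under $\sigma$.

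For injectivity, I note that the $\mathcal U$-image of $t$ records all the linear coefficients $r_i$. This holds because $\m f_{\mathcal U}(\kappa)$ is a faithful $\mathbb F$-affine space with the $x_i$ affinely independent, as it contains the terms $rx+(1-r)y$. Meanwhile $\sigma(t)=t(x,\dots,x)$ records the constant part $t(x_0,\dots,x_0)$. Two terms agreeing on both therefore coincide by the displayed normal form. \emph{I expect this injectivity argument to be the main obstacle}, specifically checking that passing to $\mathcal U$ does not collapse linear coefficients. I would settle it by exhibiting $\m f_{\mathcal U}(\kappa)$ concretely as the affine span of $X$ in some $\mathbb F$-vector space, which shows $\mathcal U$ is nontrivial with the same ring $\mathbb F$.

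\textbf{Step 1 (Case~(2) decomposition).} The argument is identical, with $\sigma\colon\m f\to\m f_{\mathcal V}(0)$ sending all generators to a fixed constant $c$, and with $c$ as origin. Since $\mathcal V$ has a constant, $\m f_{\mathcal V}(0)\neq\emptyset$.

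\textbf{Step 2 (subalgebras).} Let $\m s\le\m f$, viewed inside the product. If $S=\emptyset$, which is possible only in Case~(1), then $\m s=\m f_{\mathcal V}(0)$ is free. Otherwise fix $(a_0,b_0)\in S$. For $(a,b)\in S$ and any unary term $u$ (respectively any constant $d$ in Case~(2)), closure under $p$ gives
\[p\big((a,b),(a_0,b_0),(u(a_0),u(b_0))\big)=(a,\;b-b_0+u(b_0))\in S,\]
since $u(a_0)=a_0$ in $\mathcal U$. Hence the fibre of $S$ over each $a\in S':=\pi(S)$ is the translate $b-b_0+\langle b_0\rangle$ of the subalgebra generated by $b_0$. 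Consequently $\m s\cong \m s'\times\langle b_0\rangle$.

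It remains to identify $\langle b_0\rangle$. In Case~(1), $b_0=v(x)$ for some unary term $v$ that is not a pseudoconstant, so $v$ has nonzero linear coefficient and is injective. Thus $x\mapsto b_0$ gives $\langle b_0\rangle\cong\m f_{\mathcal V}(1)$. In Case~(2), $\langle b_0\rangle$ is always all of $\m f_{\mathcal V}(0)$, since $\m f_{\mathcal V}(0)$ is the least subalgebra.

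\textbf{Step 3 (Schreier).} The algebra $\m s'$ is a subalgebra of $\m f_{\mathcal U}(\kappa)$. In Case~(1), $\mathcal U$ is the variety of idempotent reducts of $\mathbb F$-affine spaces, so subalgebras are affine subspaces (or empty). In Case~(2), $\mathcal U$ is term equivalent to $\mathbb F$-vector spaces with the collapsed constant as $0$, so subalgebras are subspaces. In either case $\m s'$ has an affine (respectively linear) basis $Y$, so $\m s'\cong\m f_{\mathcal U}(|Y|)$. Applying Step~1 with $|Y|$ in place of $\kappa$ then gives $\m s\cong \m f_{\mathcal U}(|Y|)\times\m f_{\mathcal V}(1)\cong \m f_{\mathcal V}(|Y|)$ in Case~(1), and likewise with $\m f_{\mathcal V}(0)$ in Case~(2). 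This proves that $\mathcal V$ is a Schreier variety.
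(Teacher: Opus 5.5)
Your architecture differs from the paper's in a reasonable way. The paper picks an arbitrary complement $\theta'$ of the reflection kernel $\theta$ in $\Con(\m f)$ and uses the class $\langle z\rangle$ of a free generator as a $\theta'$-transversal. You instead write down the specific complement $\ker\sigma$ and check that $(\pi,\sigma)$ is bijective, and your surjectivity computation is correct.

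But there is a genuine gap exactly at the step you flagged. Injectivity needs $\mathcal U$ to be nontrivial: only then is $\m f_{\mathcal U}(\kappa)$ a faithful $\mathbb F$-space with independent free generators. That $\mathcal U$ ``contains the terms $rx+(1-r)y$'' says nothing about whether they are distinct in $\m f_{\mathcal U}(2)$. Your remedy of ``exhibiting $\m f_{\mathcal U}(\kappa)$ concretely'' presupposes this, since you must actually produce a nontrivial algebra satisfying every identity of $\mathcal V$ that is idempotent (resp.\ has all constants equal).

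In Case~(1) this cannot be done without a fact you never establish: \emph{every unary term has linear coefficient exactly $1$}. If $u(x)=\beta x+m$ with $\beta\neq 1$, then in an idempotent member $u(a)=a$ forces $(1-\beta)(a-o)=0$ for all $a$. So every idempotent member is trivial, and $(\pi,\sigma)$ cannot be injective for $\kappa\ge 2$. Your Step~2 only records that linear coefficients are nonzero. Your injectivity sketch never uses the hypothesis that $\mathcal V$ has no pseudoconstant, and that is precisely the input needed here.

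The missing ingredient is the paper's Stage~(A). The pairs $(1-\beta,v)$ coming from unary terms $\beta x+v$ form an $\mathbb F$-subspace. So if some $\beta\neq1$, take $\alpha=(1-\beta)^{-1}$: the unary term $\alpha u(x)+(1-\alpha)x$, obtained by substituting $u(x),x$ into an idempotent binary term, has linear coefficient $0$. It is therefore a pseudoconstant, a contradiction.

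Nontriviality of $\mathcal U$ must still be shown. The paper uses the linearization $\m f^2/\Delta_{1,1}$, which lies in $\mathcal V$ and has $|F|$ elements. All translations act trivially on it, so it is idempotent in Case~(1) and collapses the constants in Case~(2).

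Your normal-form framework offers an equally good option. Interpret each basic operation on $\mathbb F^{(\kappa)}$ by its linear part $\sum_i r_ix_i$. Linear coefficients compose by the chain rule and are determined by the identities of $\mathcal V$, so this algebra lies in $\mathcal V$. It is idempotent in Case~(1) by the coefficient-$1$ fact, and it sends every constant to $0$ in Case~(2). The homomorphism from $\m f_{\mathcal U}(\kappa)$ into it sending the free generators to an affinely (resp.\ linearly) independent family separates distinct coefficient vectors, which is the injectivity you need.

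Step~2 needs two smaller repairs:
\begin{enumerate}
\item In Case~(2), the claim $u(a_0)=a_0$ in $\mathcal U$ is false for general unary terms such as $\alpha x$ or a constant. Restrict to translations $u(x)=p(x,c,d)$ with $c,d$ constants.
\item Your computation only shows that each fibre \emph{contains} $b-b_0+\langle b_0\rangle$. This suffices, because $\langle b_0\rangle$ is all of $\m f_{\mathcal V}(1)$ (a subalgebra isomorphic to that finite algebra), resp.\ all of $\m f_{\mathcal V}(0)$. Hence the fibres are full and $S=S'\times\m f_{\mathcal V}(i)$.
\end{enumerate}
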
  

The approach that we will take to prove this lemma
will be to compare the variety $\mathcal{V}$
to a minimal subvariety $\mathcal{U}$ of $\mathcal{V}$.
The inclusion $\mathcal{U}\subseteq \mathcal{V}$
realizes $\mathcal{U}$ as a reflective subcategory of $\mathcal{V}$.
Each algebra $\m a\in\mathcal{V}$ will have
a reflection
in $\mathcal{U}$. The $\mathcal{U}$-reflection
of $\m a\in \mathcal{V}$ may be expressed as 
$\m a/\theta$ where $\theta$ is the least congruence
on $\m a$ such that $\m a/\theta\in\mathcal{U}$.
The $\mathcal{U}$-reflection of $\m f_{\mathcal{V}}(\kappa)$
satisfies the universal mapping property that
defines $\m f_{\mathcal{U}}(\kappa)$, hence
the $\mathcal{U}$-reflection of $\m f_{\mathcal{V}}(\kappa)$
is isomorphic to $\m f_{\mathcal{U}}(\kappa)$. Our goal will be to use
the $\mathcal{U}$-reflection arrow
$\m f_{\mathcal{V}}(\kappa)\twoheadrightarrow \m f_{\mathcal{U}}(\kappa)$
to establish
a direct product decomposition of 
$\m f_{\mathcal{V}}(\kappa)$ into the factors 
$\m f_{\mathcal{U}}(\kappa)$ and
a small rank free algebra of $\mathcal{V}$,
$\m f_{\mathcal{V}}(i)$, where $i=1$ in Case~(1)
and $i=0$ in Case~(2).

\begin{center}
\setlength{\unitlength}{.5mm}
\begin{picture}(60,45)
\put(0,0){$\m f_{\mathcal{U}}(\kappa)$}
\put(35,0){$\in$}
\put(55,0){$\mathcal{U}$}

\put(7,22){\rotatebox{270}{$\twoheadrightarrow$}}
\put(55,15){\rotatebox{90}{$\subseteq$}}

%\put(-40,0){$\m f_{\mathcal{V}}(\kappa)/\theta$}
\put(0,30){$\m f_{\mathcal{V}}(\kappa)$}
\put(35,30){$\in$}
\put(55,30){$\mathcal{V}$}
\end{picture}
\medskip

\captionof{figure}{The $\mathcal{U}$-reflection of
$\m f_{\mathcal{V}}(\kappa)$ is $\m f_{\mathcal{U}}(\kappa)$} \label{fig2}
\end{center}

There are two
Cases in the statement of this lemma,
and the arguments justifying them follow
the same pattern.
We briefly describe the pattern before
developing the arguments in each of the two cases.
In this description we will assume that
\begin{equation}\label{nondegenerate}
\text{$\kappa>0$, and if $\mathcal{V}$
is an idempotent variety,
then $\kappa>1$.}
\end{equation}
Notice that the case ``$\mathcal{V}$ is an idempotent variety''
can only occur in Case~(1).

Let us say a few words now about the cases
when~\eqref{nondegenerate} fails,
so that our later argument will be complete.
It is clear from the assumptions made in the statement of the lemma 
that in Case~(1) we have 
$\m f_{\mathcal{V}}(\kappa) = \emptyset = \m f_{\mathcal{U}}(\kappa)$
if $\kappa=0$ and
$|\m f_{\mathcal{V}}(\kappa)| = |\m f_{\mathcal{U}}(\kappa)| = 1$ if
$\mathcal{V}$ is idempotent and $\kappa=1$,
while in Case~(2) we have
$|\m f_{\mathcal{U}}(\kappa)| = 1$ if $\kappa=0$.
This is enough information to establish
the desired direct product decomposition of free algebras
$\m f_{\mathcal V}(\kappa)\cong\m f_{\mathcal U}(\kappa)\times \m f_{\mathcal V}(i)$
where $i=1$ in Case~(1) and $i=0$ in Case~(2).
If~\eqref{nondegenerate} fails, then
$\m f_{\mathcal{V}}(\kappa)$ has no proper subalgebras
in either of the two Cases.
Hence, in the last claim of Cases~(1) or (2), where we
assume that $\m s\leq \m f_{\mathcal{V}}(\kappa)$,
this assumption reduces to
$\m s = \m f_{\mathcal{V}}(\kappa)$.
Therefore we have $\m s\cong \m s'\times \m f_{\mathcal{V}}(i)$
for $\m s'=\m f_{\mathcal{U}}(\kappa)$ by the direct product
decomposition of free algebras that has already
been established.
This paragraph now contains enough information
to establish the validity of the
lemma in the case when~\eqref{nondegenerate} fails.
\bigskip

Let's assume henceforth that~\eqref{nondegenerate} holds.
This additional assumption on $\kappa$ and $\mathcal{V}$ assures that
$|\m f_{\mathcal{V}}(\kappa)|>1$, so the free algebra $\m f_{\mathcal{V}}(\kappa)$
is nontrivial.
Our arguments under this assumption will follow Stages (A)--(E), described here:
  
\begin{enumerate}  
\item[(A)] We examine the clone of term operations
  of $\m f:=\m f_{\mathcal{V}}(\kappa)$ in order
  to understand the unary component of this clone.
\item[(B)] We show that the linearization
  $\m f_{\nabla}:=\m f^2/\Delta_{1,1}$ of $\m f$
  (defined in Chapter~9 of \cite{FM})
  generates a minimal
  subvariety $\mathcal{U}\subseteq \mathcal{V}$.
  In Case~(1), $\mathcal{U}$ is the idempotent subvariety
  of $\mathcal{V}$. This subvariety is term equivalent
  to the variety of affine spaces over the field $\mathbb{F}$.
  In Case~(2), $\mathcal{U}$
  is the subvariety of $\mathcal{V}$
  axiomatized by identities that
  identify all
  pseudoconstants of $\mathcal{V}$. This subvariety is term equivalent
  to the variety of $\mathbb{F}$-vector spaces.
\item[(C)]
  The minimal subvariety $\mathcal{U}$ is a reflective
  subcategory of $\mathcal{V}$.
There is a smallest
congruence $\theta$ on $\m f$ (namely the kernel of the
$\mathcal{U}$-reflection arrow)
such that
$\m f/\theta\in \mathcal{U}$. We will see that at least
one $\theta$-class supports a subalgebra $\m s$
of $\m f_{\mathcal{V}}(\kappa)$. The algebra $\m s$
is isomorphic to
$\m f_{\mathcal{V}}(1)$ in Case~(1) and to
$\m f_{\mathcal{V}}(0)$ in Case~(2).
This congruence has a permuting complement $\theta'$
that has $\m s$ as a transversal.
\item[(D)]
Any subalgebra
$\m s\leq \m f_{\mathcal V}(\kappa)$ is a union of $\theta$-classes.
\item[(E)]
$\mathcal{V}$ is Schreier.
\end{enumerate}
This will be enough. Stages (A)--(C)
suffice to yield a direct product decomposition
of free algebras
\[
\m f_{\mathcal{V}}(\kappa)\cong 
\m f_{\mathcal{V}}(\kappa)/\theta\times 
\m f_{\mathcal{V}}(\kappa)/\theta'\cong
\m f_{\mathcal{U}}(\kappa)\times \m f_{\mathcal{V}}(i)
\]
where $i=1$ in Case~(1) and $i=0$ in Case~(2).
The existence of this decomposition is
the first claim in each of the two Cases of this lemma.
The second and final claim in both Cases~(1) and (2)
will follow from Stage~(D).
The final statement of the lemma will be explained
by Stage (E).
  
The rest of this proof consists of $10$ components:
for each of the Cases~(1) and (2)  we must explain (A)--(E).

In either case, the $\mathcal{V}$-free algebra
$\m f := \m f_{\mathcal{V}}(\kappa)$ is polynomially equivalent to
some vector space $\m v$ over some finite field $\mathbb{F}$.
Much is known about the structure of
algebras $\m a$ that are polynomially
equivalent to some faithful
left $\m r$-module $\m m$ for some given
unital, associative ring $\m r$.
In particular, $\m a$ and $\m m$ share
the same universe ($A=M$) and the
same polynomial clone ($\textrm{Pol}(\m a)=\textrm{Pol}(\m m)$).
The polynomial operations of a module are well understood,
namely they are the module operations of the form
\[
p(x_1,\ldots,x_k) = \sum_{i=1}^k (\alpha_i\cdot x_i) + m =
\textrm{(linear part) + (constant).}
\]
Here $\alpha_i\in R$ and $m\in M$.
A polynomial of this form will be idempotent when
$m=0$ and $\sum_{i=1}^k \alpha_i = 1$ in $\m r$.
The following are some basic facts about the clone
of term operations of an algebra $\m a$
that is polynomially equivalent to a left $\m r$-module $\m m$.
\begin{itemize}
\item Every idempotent polynomial operation
of $\m m$ is a term operation
of $\m a$, and every term operation of $\m a$ is a polynomial
operation of $\m m$.
\item More generally, a polynomial operation
$p(x_1,\ldots,x_k)=\sum_{i=1}^k (\alpha_i\cdot x_i) + m$ of $\m m$
is a term operation of $\m a$ if and only if
the unary specialization
$\widehat{p}(x):=p(x,x,\ldots,x) = (\sum_{i=1}^k \alpha_i)\cdot x + m$
is a unary term operation of $\m a$.
\item  
The set of pairs $(1-\beta,m)\in R\times M$,
where $\beta x+m$ is a unary term operation of $\m a$,
is an $\m r$-submodule of the $\m r$-module
${}_{\m r}\m r\times \m m$.
\item  
Conversely, for every $\m r$-submodule
$\m n\leq {}_{\m r}\m r\times \m m$,
there is a uniquely determined clone of an algebra
on the set $M$ that is polynomially equivalent to $\m m$
and has unary component
\[
\{\beta x + m\;|\; (1-\beta,m)\in N\}.
\]
\end{itemize}
From the first two bullet points, it follows
that the clone of term operations of $\m a$
is generated by its idempotent subclone
and the unary component of its clone.
The idempotent subclone of $\m a$ agrees with that of $\m m$.
From the third
and fourth bullet points, it is clear that the data
of the unary component of the clone of term
operations of $\m a$ is encoded in a submodule
of the $\m r$-module
  ${}_{\m r}\m r\times \m m$.
These results can be derived from Chapter 2 of
\cite{clones}, especially Proposition~2.6
and its proof.

In our situation, the ring involved is the
finite field $\m r = \mathbb{F}$
and the $\m r$-module $\m m$ is the $\mathbb{F}$-vector space
$\m v$. Suppose that $\m n\leq {}_{\mathbb{F}}\mathbb{F}\times \m v$
is the
$\mathbb{F}$-subspace
that determines the unary component of the
clone of term operations of $\m f=\m f_{\mathcal{V}}(\kappa)$.
With these notations, we present the proofs of
Cases~(1) and~(2) of Lemma~\ref{Vect_lm}.

\begin{proof}[Proof of Lemma~\ref{Vect_lm}, Case~(1), when $\kappa>0$]   
(A): Since we are in Case~(1), we know that
$\mathcal{V}$ has no pseudoconstants.
It follows that $\m f_{\mathcal{V}}(\kappa)$
has no constant unary term operations.
In other words, there is no unary term operation
of this algebra whose vector space representation is
$\beta x+v$ with $\beta=0$.
Thus, $\m n$ can contain no pair
$(1-\beta,v)$ where $\beta=0$.
But if $(\alpha,u)\in N$ for some $\alpha\in \mathbb{F}-\{0\}$,
then $\alpha^{-1}(\alpha,u)=(1,\alpha^{-1}u)\in N$ and
$(1,\alpha^{-1}u)$ has the form $(1-\beta,v)$
for $\beta=0$. This cannot happen
if $\m f$ has no pseudoconstants,
so we cannot have $(\alpha,u)\in N$ if $\alpha\neq 0$.
This implies that 
$\m n\leq \{0\}\times \m v$. From this, we learn that the
unary term operations of $\m f$
have the form $\beta x+v$ only when $\beta=1$,
which means that $\beta x+v=x+v$ is a translation.
Since $\m n$ is an $\mathbb{F}$-subspace
of ${}_{\mathbb{F}}\mathbb{F}\times \m v$
the unary component of the clone of term
operations of $\m f$ must be the set of all translations 
$\{x+v\;|\;v\in W\}$
from some $\mathbb{F}$-subspace $W\leq \m V$.
This completes Stage~(A) for Case~(1).

(B): Let's use the result from (A) to complete Stage~(B).
$\m f$ is an abelian algebra in a Maltsev variety,
so $\m f^2$ has a unique congruence $\Delta_{1,1}$
that has the diagonal of $\m f^2$ as a class.
The quotient $\m f_{\nabla}:=\m f^2/\Delta_{1,1}$
is called the {\bf linearization} of $\m f$.
It has the same cardinality as $\m f$,
so it will be nontrivial, since $\m f$ is.
The linearization of $\m f$ belongs
to the variety $\mathcal{V}$, because $\m f\in \mathcal{V}$
and $\m f_{\nabla}$ is
constructed from $\m f$ using only constructions of the form
$\mathsf{H}$, 
$\mathsf{S}$, or
$\mathsf{P}$.
Hence, for us, $\m f_{\nabla}$ is also polynomially
equivalent to an $\mathbb{F}$-vector space.
In particular, the clone of term operations
of $\m f_{\nabla}$ is generated by
its idempotent vector space polynomials
and by some unary operations.
The unary component of the clone of term operations
$\m f_{\nabla}$ is induced by that of $\m f$.
But, it turns out, vector space
translations do not survive linearization.
Each translation $x+v$ on $\m f$
induces the identity function on 
$\m f_{\nabla}$. This implies that the
clone of term operations of
$\m f_{\nabla}$ is generated entirely
by idempotent operations,
so $\m f_{\nabla}$ must be an idempotent algebra.
One justifies the claim that
`translations do not survive linearization'
as follows. Choose any element
$(a,b)/\Delta_{1,1}\in \m f_{\nabla}$
and any unary term operation $u(x)$
of this algebra. In $\m f$, we have seen (by (A))
that the vector space representation of the
operation $u(x)$ is some translation $u(x)=x+v$.
Hence, in
$\m f_{\nabla}$, $u((a,b)/\Delta_{1,1})=(a+v,b+v)/\Delta_{1,1}$.
But $(a+v,b+v)/\Delta_{1,1} = (a,b)/\Delta_{1,1}$,
since the term condition guarantees that
\[
(a,b)^t=
\begin{bmatrix}a\\
  b
\end{bmatrix}
=
\begin{bmatrix}a+\underline{0}  \\
  b+\underline{0}
\end{bmatrix}
\quad
\Delta_{1,1}
\quad 
\begin{bmatrix}a+\underline{v}  \\
  b+\underline{v}
\end{bmatrix}
=\begin{bmatrix}a+v\\
  b+v
\end{bmatrix} = (a+v,b+v)^t.
\]
Thus, the arbitrarily chosen element
$(a,b)/\Delta_{1,1}\in \m f_{\nabla}$
is a fixed point of the arbitrarily chosen
unary term operation $u(x)$.
This is enough to show that
$\m f_{\nabla}$ is an idempotent algebra.
This completes our work on (B), since our
goal was only to show that the linearization
$\m f_{\nabla}$ is nontrivial and idempotent.
(The fact that a nontrivial idempotent algebra
that is polynomially equivalent to a vector
space generates a minimal variety --- called the variety
of ``affine $\mathbb{F}$-spaces'' --- is well-known,
see \cite{k-k-sz}.)

(C): Next we discuss Stage~(C) in Case~(1).
We will need the fact that every nonempty subuniverse $S$ of
$\m f = \m f_{\mathcal{V}}(\kappa)$ is the congruence class of a unique
congruence $\sigma_S$ of $\m f$. This can be justified as follows.
Since $\m f$
is polynomially equivalent to an $\mathbb{F}$-vector space $\m v$,
it follows from our discussion preceding the proof of Lemma~\ref{Vect_lm}
that the affine $\mathbb{F}$-vector space reduct $\m v_{\text{\rm aff}}$
of $\m v$ (i.e., the idempotent reduct of $\m v$) is a reduct of $\m f$.
Thus, every nonempty subuniverse of $\m f$ supports a
nonempty affine subspace of $\m v_{\text{\rm aff}}$.
The affine space $\m v_{\text{\rm aff}}$ is known to have the property that
every affine subspace is the congruence class of a unique congruence, and
$\m v_{\text{\rm aff}}$ has the same congruences as $\m v$ and $\m f$ (as they
are polynomially equivalent), therefore every nonempty subuniverse $S$ of
$\m f$ is the congruence
class of a unique congruence $\sigma_S$ of $\m f$, as claimed.
Necessarily, $\sigma_S=\cg(S\times S)$, the congruence generated by $S\times S$.

Choose any $s, t\in \m f$.
We claim that if
$\m s=\langle s\rangle_{\m f}$ and 
$\m t=\langle t\rangle_{\m f}$ are
cyclic subalgebras of $\m f$, then $\sigma_S=\sigma_T$.
This can be justified as follows.
The subuniverse $S$ equals the closure of $\{s\}$
under the action of the unary term operations
of $\m f$, which are the unary functions which have
vector space representations that are
translations, $u(x)=x+v$,
$v\in W$, for some fixed subspace $W\leq \m v$.
So, $S=\{s+v\;|\;v\in W\}$ and
similarly, $T=\{t+v\;|\;v\in W\}$.
The generating pairs of $\sigma_S$ and $\sigma_T$
differ by polynomial maps of $\m f$.
That is, a generator $(s+v,s+w)$ of $\sigma_S$
differs from the corresponding generator
$(t+v,t+w)$ of $\sigma_T$
by unary polynomial translations.
That is, the unary polynomial
$p(x)=x-s+t$ of $\m f$,
constructed from the Maltsev operation
$x-y+z$ of $\m f$ and the constants $s$ and $t$,
maps the pair $(s+v,s+w)$ of $\sigma_S$
to the pair $p((s+v,s+w))=(t+v,t+w)$,
which is a generator
for $\sigma_T$. Similarly, the polynomial
$q(x)=x-t+s$
maps the pair $(t+v,t+w)$ of $\sigma_T$
to the pair $(s+v,s+w)$, which is a generator
for $\sigma_S$.
Hence, each one of the congruences $\sigma_S$ and $\sigma_T$
contains pairs that generate the other.
This shows that
\[
\sigma_S = \cg(S\times S) = \cg(T\times T)
= \sigma_T.
\]
This calculation, combined with the fact discussed in the previous paragraph,
shows that there is a congruence $\theta$
on $\m f$ whose classes are exactly the cyclic subuniverses
of $\m f$, which are exactly the 
the orbits of the group $\{x+v\;|\;v\in W\}$
of unary term operations of $\m f$ acting on $F$.
Necessarily, $\theta$ is the least congruence on $\m f$
such that singleton subsets of $\m f/\theta$
are subuniverses; i.e., $\theta$ is the least congruence on $\m f$
such that $\m f/\theta$ is an idempotent algebra.
Since $\mathcal{U}$ is an idempotent subvariety of $\mathcal{V}$,
and $\mathcal{U}$ contains all idempotent algebras
that are polynomially equivalent to $\mathbb{F}$-spaces,
$\theta$ must be the kernel of the
$\mathcal{U}$-reflection arrow
$\m f=\m f_{\mathcal{V}}(\kappa)\twoheadrightarrow\m f_{\mathcal{U}}(\kappa)$
arising from
the inclusion $\mathcal{U}\subseteq \mathcal{V}$. This implies
that $\m f_{\mathcal{V}}(\kappa)/\theta\cong \m f_{\mathcal{U}}(\kappa)$.

The congruences of $\m f$ are the same as the congruences of the
underlying vector space, hence $\Con(\m f)$ is a complemented lattice.
Choose any complement $\theta'$ to $\theta$.
If $z$ is a free generator of $\m f_{\mathcal{V}}(\kappa)$,
then the subuniverse
$Z=\langle z\rangle_{\m f}$ is a $\theta$-class. Since
$\theta'$ is a permuting complement to $\theta$, $Z$ is
a $\theta'$-transversal. Since $Z$ is a subuniverse
of $\m f_{\mathcal{V}}(\kappa)$ that is generated by the free generator
$z$, the subalgebra $\m z$ supported by $Z$
must be freely generated by $z$
and therefore $\m z\cong \m f_{\mathcal{V}}(1)$.
Altogether, these observations yield
$\m f_{\mathcal{V}}(\kappa)/\theta'\cong \m z\cong \m f_{\mathcal{V}}(1)$.
Since $Z$ is a transversal for $\theta'$
we obtain the direct product decomposition
\[
\tag{$\dagger$} \label{ISO}
\m f_{\mathcal{V}}(\kappa)\cong 
\m f_{\mathcal{V}}(\kappa)/\theta\times 
\m f_{\mathcal{V}}(\kappa)/\theta'\cong 
\m f_{\mathcal{U}}(\kappa)\times
\m z
\cong 
\m f_{\mathcal{U}}(\kappa)\times
\m f_{\mathcal{V}}(1).
\]
This concludes the explanation of Stage~(C) in Case~(1)
and also explains the isomorphism in the statement of
Case~(1).

(D): We now turn to Stage~(D) in Case~(1).
Let $\m s\leq \m f_{\mathcal{V}}(\kappa)$ be any subalgebra.
$\m s$ must be closed under the action of the group
of unary term operations of $\m f = \m f_{\mathcal{V}}(\kappa)$.
Since the orbits of this action are the $\theta$-classes, 
$\m s$ is a union of $\theta$-classes.

(E): From the fact proved in Stage (D), that
$\m s$ is a union of $\theta$-classes, we know that
$\m s$ is a product subalgebra
with full second factor in the product
\[
\m f_{\mathcal{V}}(\kappa)\cong 
\m f_{\mathcal{U}}(\kappa)\times
\m f_{\mathcal{V}}(1).
\]
Hence
$\m s$ is isomorphic to $\m s'\times \m f_{\mathcal{V}}(1)$
for some $\m s'\leq \m f_{\mathcal{U}}(\kappa)$.
The idempotent variety $\mathcal{U}$
of ``affine $\mathbb{F}$-spaces''
has the property that every member is free
(see \cite{k-k-sz}).
Thus, $\m s'\cong \m f_{\mathcal{U}}(\lambda)$ for some
$\lambda$. Using the isomorphism in $(\dagger)$, we get
\[
\m s\cong \m s'\times \m f_{\mathcal{V}}(1)\cong 
\m f_{\mathcal{U}}(\lambda)\times
\m f_{\mathcal{V}}(1)\cong
\m f_{\mathcal{V}}(\lambda).
\]
We conclude that every subalgebra $\m s$
of a $\mathcal{V}$-free
algebra $\m f_{\mathcal{V}}(\kappa)$
is $\mathcal{V}$-free. Hence,
$\mathcal{V}$ is a Schreier variety, establishing Stage~(E)
in Case~(1).
\end{proof}

\bigskip

\begin{proof}[Proof of Lemma~\ref{Vect_lm}, Case~(2), when $\kappa>0$]

We follow the same pattern to justify
Stages~(A)--(E) in Case~(2) (when $\mathcal{V}$ has at least one constant)
in the cases where $\kappa>0$.

To simplify the argument, select
one of the constants of $\m f$, call it $\mathbf{0}$,
and treat it as the origin of $\m v$.
Let's argue that there is no loss of generality in
this assumption 
that $\mathbf{0}$ is the origin of $\m v$.
At this point, the only things that have been
assumed about $\m v$ is that it is a vector space,
it has the same universe as $\m f$, and it has the same
polynomial operations as $\m f$.
If we conjugate the clone of $\m v$ by a
vector space polynomial translation, say $t(x)=x+c$,
then we do not change the polynomial clone of $\mathcal{V}$
(it remains the same as the polynomial clone of $\m f$),
but the clone of term operations of $\m v$
will change to an isomorphic clone on the set $V$
where the origin has been translated by $c$.
We may replace the original
vector space $\m v$ by the conjugate
space and preserve all assumptions that we have made about $\m v$,
but move the origin to any  pre-selected vector in $\m v$.
Thus, since $\m f$ has at least one constant,
there is no harm in assuming
that the origin is the interpretation of the
constant $\mathbf{0}$
of $\m f$. Let's continue with the
justification for Stages~(A)--(E) for Case~(2) under this assumption.

(A): The $\mathcal{V}$-free algebra
$\m f := \m f_{\mathcal{V}}(\kappa)$ is polynomially equivalent to
the $\mathbb{F}$-space $\m v$, and the clone
of term operations of $\m f$
contains the clone of term operations of $\m v$.
Let's determine the unary component
of the clone of term operations of $\m f$.
Recall that it is determined by
some $\mathbb{F}$-subspace 
$\m n\leq {}_{\mathbb{F}}\mathbb{F}\times \m v$
in the sense that $u(x) = \beta x+v$
will be the vector space representation
of a unary term operation if and only if
$(1-\beta,v)\in \m n$.
In the previous paragraph we assumed
that the origin $\mathbf{0}$
is named by a constant, hence there is a
unary pseudoconstant $u(x) = \beta x+v = \mathbf{0}$.
That means the pair $(1-\beta,v) =
(1-0,\mathbf{0}) =  (1,\mathbf{0})$
belongs to $\m n$. Since $\m n$ is an $\mathbb{F}$-subspace of 
${}_{\mathbb{F}}\mathbb{F}\times \m v$, it follows
that for every scalar $\alpha\in \mathbb{F}$ we have
\[
\alpha\cdot (1,\mathbf{0}) = (\alpha,\mathbf{0})\in \m n.
\]
That is,
${}_{\mathbb{F}}\mathbb{F}\times \{\mathbf{0}\}\subseteq \m n$.
This implies that $\m n$ is a product subspace
of ${}_{\mathbb{F}}\mathbb{F}\times \m v$,
namely $\m n= {}_{\mathbb{F}}\mathbb{F}\times \m w$ for some
$\mathbb{F}$-subspace $\m w$ of $\m v$.
In particular, $\m n$ is generated by the unary term
operations corresponding to elements of
${}_{\mathbb{F}}\mathbb{F}\times \{\mathbf{0}\}$ ---
these are the scalar multiplications $x\mapsto \alpha x$ ---
together with some unary term
operations corresponding to elements of
$\{0\}\times \m w$ --- these are
unary translations $t(x) = x+v$ for $v$
in $\m w$.
Since the clone of term operations of $\m v$
contains (and is generated by)
the idempotent subclone of $\m f$,
all unary scalar multiplications, and
the constant $\mathbf{0}$, it follows
that the clone of term operations of $\m f$
contains the clone of term operations of $\m v$
and that the clone of term operations of $\m f$
is generated by the clone of term operations of $\m v$
together with the unary translations $t(x) = x+v$ for $v$
in the subspace $\m w$ of $\m v$.
Alternatively, the clone of term operations of $\m f$
is generated by the clone of term operations of $\m v$
together with the constant operations
where the constants come from the subspace
$\m w$, which is the orbit of $\mathbf{0}$ under the translations:
\[\{t(\mathbf{0}) = \mathbf{0} + v = v\;|\;v\in W\} = W.\]
To verify this alternative description of the clone of $\m f$,
observe that if $t(x)=x+v$ is a translation in
the clone of term operations of $\m f$, then
$t(\mathbf{0}) = v\in W$ is a constant in the clone
of term operations of $\m f$. Conversely, if $v$
is a constant in the clone of term operations of
$\m f$, then the unary translation
$t(x) = x-\mathbf{0}+v = x+v$ is a translation in the clone
of term operations of $\m f$, which we constructed
from the Maltsev operation $x-y+z$ of $\m f$ and
the constants $\mathbf{0}$ and $v$ from the clone
of term operations of $\m f$.
Thus, up to term equivalence, $\m f$ may be viewed
as the expansion of $\m v$ by the group of unary
translations $\{x+v\;|\;v\in W\}$, for some subspace $\m w\leq \m v$,
or $\m f$ may be viewed
as the expansion of $\m v$ by the set of constants
from $W$. This completes the argument for Stage (A)
(= understanding the unary component of the clone
of $\m f$).

(B): Let's adhere to the path developed for the
Case~(1) argument by examining
the linearization $\m f_{\nabla}$
of $\m f = \m f_{\mathcal{V}}(\kappa)$.
The linearization of $\m f$ is the same cardinality
as $\m f$, so $\m f_{\nabla}$ is nontrivial, since $\m f$ is.
The linearization satisfies all constant identities
$c\approx d$, since
\[
c^{\m f_{\nabla}} = (c^{\m f},c^{\m f})/\Delta_{1,1}, \quad
d^{\m f_{\nabla}} = (d^{\m f},d^{\m f})/\Delta_{1,1}, 
\]
and $((c^{\m f},c^{\m f}),(d^{\m f},d^{\m f})\in\Delta_{1,1}$
since the diagonal of $\m f^2$ is a $\Delta_{1,1}$-class.
Hence, since $\m f$ is nontrivial, then
$\m f_{\nabla}$ is a nontrivial member
of the subvariety $\mathcal{U}$
that is axiomatized relative to $\mathcal{V}$
by all constant identities.
The clone of term operations of
$\m f_{\nabla}$ is generated by the $\mathbb{F}$-vector space
operations and the constants it contains.
Since all constants were collapsed
to $\mathbf{0}$ during linearization,
and $\mathbf{0}$ is a vector space constant,
we derive that $\m f_{\nabla}$ is term equivalent
to a nontrivial $\mathbb{F}$-vector space.
This shows that the (minimal) variety $\mathcal{U}$ of
$\mathbb{F}$-vector spaces is a nontrivial
subvariety of $\mathcal{V}$.
This completes the argument for Stage~(B) in Case~(2).

(C): Let's discuss Stage~(C) in Case~(2).
The $\emptyset$-generated subuniverse of
$\m f_{\mathcal{V}}(\kappa)$ equals the set $W$ of
the constants of this algebra.
The subalgebra supported
by this subuniverse
is isomorphic to $\m f_{\mathcal{V}}(0)$.
The congruence $\theta = \cg(W\times W)$
is the least congruence that identifies
all constants with $\mathbf{0}$,
hence it must be the kernel of the
$\mathcal{U}$-reflection arrow.
As in the Case~(1) argument,
$\theta$ has a permuting complement $\theta'$
and $W$ is a transversal for $\theta'$.
This allows us to derive that
\[
\m f_{\mathcal{V}}(\kappa) \cong 
\m f_{\mathcal{V}}(\kappa)/\theta\times  
\m f_{\mathcal{V}}(\kappa)/\theta'\cong 
\m f_{\mathcal{U}}(\kappa) \times
\m f_{\mathcal{V}}(0),
\]
as required by the statement of Case~(2).

(D): Now we complete Stage~(D) in Case~(2).
Any subalgebra $\m s\leq \m f_{\mathcal{V}}(\kappa)$
must contain the constants.
$\m s$ must be closed under the unary term
operations of the form $\{t(x) = x+v\;|\;v\in W\}$.
These term operations form a group acting
on $\m f_{\mathcal{V}}(\kappa)$ whose orbits 
are the cosets of $\theta$. Thus, $\m s$
is a union of $\theta$-classes.

(E): We complete Stage~(E) in Case~(2) the same way we completed
it in Case~(1). The fact that any subalgebra
$\m s\leq \m f_{\mathcal{V}}(\kappa)$
is a union of $\theta$-classes implies that
$\m s$ is a product subalgebra
with full second factor in the product
\[
\m f_{\mathcal{V}}(\kappa)\cong 
\m f_{\mathcal{U}}(\kappa)\times
\m f_{\mathcal{V}}(0).
\]
Hence
$\m s$ is isomorphic to $\m s'\times \m f_{\mathcal{V}}(0)$
for some $\m s'\leq \m f_{\mathcal{U}}(\kappa)$.
But $\m f_{\mathcal{U}}(\kappa)$ is term equivalent
to a vector space, and every vector space is
free, so $\m s'\cong \m f_{\mathcal{U}}(\lambda)$ for some
$\lambda$. This shows that 
\[
\m s\cong \m s'\times \m f_{\mathcal{V}}(0)\cong 
\m f_{\mathcal{U}}(\lambda)\times
\m f_{\mathcal{V}}(0)\cong
\m f_{\mathcal{V}}(\lambda).
\]
We conclude that every subalgebra $\m s$
of a $\mathcal{V}$-free
algebra $\m f_{\mathcal{V}}(\kappa)$
is $\mathcal{V}$-free. Hence,
$\mathcal{V}$ is a Schreier variety in Case~(2).
\end{proof}

\section{The Main Theorem} \label{main}

In this section, we prove the main theorem of this paper.

\begin{thm} \label{main_thm}
  Assume that $\mathcal V$ is a nontrivial
  locally finite variety.
  If $\mathcal V$ has a pseudoconstant, then
  assume that $\mathcal V$ also has a
  constant. 
The following are equivalent.
\begin{enumerate}
\item $\mathcal V$ is a Schreier variety.
  \smallskip
  
\item $\mathcal V$ is a Nielsen variety.
  \smallskip

\item Every finite
  algebra in $\mathcal V$ is $\langle 0,1\rangle$-minimal.
  \smallskip

\item $\mathcal V$ is generated by a single
  finite, abelian, $\langle 0,1\rangle$-minimal
  algebra.
\end{enumerate}
\end{thm}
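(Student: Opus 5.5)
We will prove the cycle of implications $(1)\Rightarrow(2)\Rightarrow(3)\Rightarrow(4)\Rightarrow(1)$; the only step requiring real work is $(2)\Rightarrow(3)$. The implication $(1)\Rightarrow(2)$ is immediate from the definitions. For $(3)\Rightarrow(4)$ we use the uniformization of Section~\ref{decomp}: since $\mathcal V$ is locally finite and every finite member is $\langle 0,1\rangle$-minimal, ${\mathcal V}_{\textrm{fin}}$ contains no algebras of types $\mathbf{3}$--$\mathbf{5}$. Moreover, either every finite member is polynomially equivalent to a $G$-set for a single finite group $G$, or every finite member is polynomially equivalent to an $\mathbb F$-vector space for a single finite field $\mathbb F$. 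In either case $\m f_{\mathcal V}(\omega)$ is the directed union of the finite algebras $\m f_{\mathcal V}(n)$, so $\mathcal V$ is generated by these. We need a \emph{single} generator. In the vector space case, Lemma~\ref{Vect_lm} shows $\m f_{\mathcal V}(n)\cong \m f_{\mathcal U}(n)\times\m f_{\mathcal V}(i)$ with $\mathcal U$ minimal, and we expect $\m f_{\mathcal V}(2)$ (or $\m f_{\mathcal V}(3)$) to generate $\mathcal V$. In the $G$-set case the variety is essentially unary, so $\m f_{\mathcal V}(1)$ (or $\m f_{\mathcal V}(2)$) suffices. All of these are finite, abelian, and $\langle 0,1\rangle$-minimal.

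For $(4)\Rightarrow(1)$, let $\m a$ be the finite abelian $\langle 0,1\rangle$-minimal generator. By P\'alfy's classification $\m a$ is polynomially equivalent either to a $G$-set or to an $\mathbb F$-vector space. We then need every finite member of $\mathcal V=\mathsf{HSP}(\m a)$ to have the same form. For the $G$-set type, terms of $\m a$ depend on at most one variable, and this persists through $\mathsf{HSP}$, giving an essentially unary variety whose nonconstant unary terms act as permutations of $\m a$. We expect the $G$-set identities $u_1(x)\approx x$ and $u_g(u_h(x))\approx u_{gh}(x)$ to hold because they hold in $\m a$. For the vector space type, $\m a$ has a Maltsev term and is abelian, so $\mathcal V$ is affine over some ring, and the ring acting on $\m a$ is $\mathbb F$. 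Once this is in place, Lemma~\ref{infinite} together with Lemma~\ref{Gset_lm} or Lemma~\ref{Vect_lm} shows that $\mathcal V$ is Schreier.

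The main obstacle is $(2)\Rightarrow(3)$. Assume $\mathcal V$ is Nielsen. We first show that every finitely generated free algebra $\m f=\m f_{\mathcal V}(n)$ is $\langle 0,1\rangle$-minimal; since every finite algebra is a quotient of some such $\m f$, and quotients of permutational algebras are permutational, this yields (3). Let $p(x)$ be a unary polynomial of $\m f$ that is neither constant nor a permutation. Its image $p(F)$ is a proper subset, and we aim to exploit it. Write $p(x)=t(x,\bar a)$ with $\bar a$ in $\m f$. Consider the subalgebra $\m s$ generated by $p(x_0)$ and a few images under $t$, or a subalgebra of the form $t(\m f_{\mathcal V}(1),\bar a)$-generated sets in a larger free algebra $\m f_{\mathcal V}(n+1)$ with a new generator $x$. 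By the Nielsen property this subalgebra is free of some rank $m$, and we compare cardinalities. Since $|\m f_{\mathcal V}(k)|$ is strictly increasing in $k$ when $\mathcal V$ is nontrivial and has no pseudoconstants beyond constants, a polynomial collapsing $\m f$ noninjectively but nonconstantly should produce a finitely generated subalgebra whose size lies strictly between consecutive free spectrum values, or a free algebra of rank $1$ embedded with a proper collapse. The Tame Congruence Theory route is the alternative: if $\m f$ is not minimal, take a minimal set $U=e(F)$ for some idempotent polynomial $e$ with $e(F)\ne F$, and use the Nielsen property on the subalgebra of $\m f_{\mathcal V}(n+1)$ generated by $e(x)$-type elements. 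We would first try this counting approach, using the pseudoconstant hypothesis (Section~\ref{FV(0)}) to rule out degenerate constant subalgebras, and fall back on type considerations, excluding types $\mathbf{3}$--$\mathbf{5}$ via $2$-element quotients whose squares cannot be free, if the counting does not close.
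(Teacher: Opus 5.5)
Your outer implications are essentially the paper's, but the step that carries the theorem, $(2)\Rightarrow(3)$, is not proved. You list several strategies: counting with a polynomial $t(x,\bar a)$ of $\m f_{\mathcal V}(n)$, a minimal-set argument, and excluding types $\mathbf{3}$--$\mathbf{5}$. None is carried out, and as written none closes.

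Your instinct to combine the Nielsen property with strict growth of the free spectrum is the right one. However, you never name a subalgebra that is trapped between two consecutive free algebras. You also never say how non-injectivity of $p$ would bound its size. With parameters $\bar a$ that are arbitrary elements of $\m f$, there is no such trapping. The type-exclusion fallback misuses the hypothesis: the Nielsen property constrains only subalgebras of free algebras, not $2$-element quotients or their squares.

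The missing idea is to replace the parameters by free generators and work with the term itself. Let $p(x)=t^{\m a}(x,a_2,\ldots,a_n)$ be a nonconstant unary polynomial of any finite $\m a\in\mathcal V$; there is no need to reduce to free algebras first.
\begin{enumerate}
\item Since $p$ is nonconstant, $t$ depends on $x_1$ in $\mathcal V$. So in $\m f=\m f_{\mathcal V}(x_1,\ldots,x_n)$ the element $t(x_1,\ldots,x_n)$ lies outside $\m s=\langle x_2,\ldots,x_n\rangle\cong\m f_{\mathcal V}(n-1)$.
\item Hence $\m t=\langle t,x_2,\ldots,x_n\rangle$ satisfies $|\m f_{\mathcal V}(n-1)|<|\m t|\le|\m f_{\mathcal V}(n)|$.
\item $\m t$ is free by the Nielsen property, and the free spectrum of a nontrivial variety is strictly increasing. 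Therefore $\m t=\m f$. (No pseudoconstant hypothesis is needed for the strict growth.)
\item So $x_1=s(t,x_2,\ldots,x_n)$ for some term $s$. This is the identity $s(t(x_1,\ldots,x_n),x_2,\ldots,x_n)\approx x_1$ of $\mathcal V$.
\item Specializing $x_i\mapsto a_i$ shows that $p$ has a left inverse, hence is a permutation of the finite set $A$.
\end{enumerate}

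There are also two smaller points.
\begin{itemize}
\item In $(3)\Rightarrow(4)$ you only ``expect'' a single generator. The algebra $\m f_{\mathcal V}(2)$ works.
  \begin{itemize}
  \item In the $G$-set case, the variety is essentially unary, so every identity reduces to one in at most two variables.
  \item In the vector-space case, use $\m f_{\mathcal V}(\kappa)\cong\m f_{\mathcal U}(\kappa)\times\m f_{\mathcal V}(i)$ together with minimality of $\mathcal U$.
  \end{itemize}
  By contrast, $\m f_{\mathcal V}(1)$ can fail: for the variety of sets it is trivial.
\item Your $(4)\Rightarrow(1)$ is fine in substance. The paper argues more directly: polynomial equivalence to a $G$-set or a vector space is preserved by finite powers, nonempty subalgebras and quotients. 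This gives (3), and the lemmas then give (1).
\end{itemize}
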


\begin{proof}

[(1)$\Rightarrow$(2)] If every subalgebra of a free algebra
is free, then every finitely generated subalgebra
of a free algebra is free.

[(2)$\Rightarrow$(3)] 
Let $\mathcal V$ be a nontrivial, locally finite,
Nielsen variety and assume $n>0$.

\begin{clm} \label{main_clm}  
If
$t(x_1,\ldots,x_n)$ is an $n$-ary $\mathcal V$-term
that depends on its first variable in $\mathcal V$, then
there is a $\mathcal V$-term
$s(x_1,\ldots,x_n)$ such that
$$
\mathcal V\models s(t(x_1,x_2,\ldots,x_n),x_2,\ldots,x_n)\approx x_1.
$$
\end{clm}

\cproof
Let $\m f=\m f_{\mathcal V}(x_1,\ldots,x_n)=\m f_{\mathcal V}(n)$ be the
$\mathcal V$-free
algebra generated by the finite set $X_n:=\{x_1,\ldots,x_n\}$.
Necessarily $t=t^{\m f}(x_1,\ldots,x_n)\in F$.
Let $\m s\leq \m f$
be the subalgebra of $\m f$
generated by $X_{n-1}:=\{x_2,\ldots,x_n\}$.
Since $\m s$ is generated by the subset $X_{n-1}$
of the free generating set $X_n$ for $\m f$,
$\m s$ is free in $\mathcal V$ over $X_{n-1}$.
The fact that the term $t(x_1,\ldots,x_n)$ depends
on its first variable in $\mathcal V$
is equivalent to the statement
that $t\notin S$. Since $t\notin S=\langle X_{n-1}\rangle$,
the subalgebra $\m t\leq \m f$
generated by $\{t,x_2,\ldots,x_n\}$
properly extends $\m s$, so
\[
|\m f_{\mathcal V}(n-1)|=|\m s|<|\m t|\leq |\m F| = 
|\m f_{\mathcal V}(n)|.
\]
Since $\m t$ is a finitely generated subalgebra
of the $\mathcal V$-free algebra $\m f$, 
$\m t$ is free in $\mathcal V$.
Since the sizes of the free algebras
in a nontrivial locally finite variety
strictly increase as the rank of the algebra
increases, and $\m t$ is a $\mathcal{V}$-free algebra satisfying
\[
|\m f_{\mathcal V}(n-1)|<|\m t|\leq 
|\m f_{\mathcal V}(n)|,
\]
we derive that 
$|\m t| = |\m f|$.
Since $\m t\leq \m f$, we derive that $\m t=\m f$.
Hence $x_1\in \m f$ belongs to the subalgebra $\m t$ of $\m f$
generated
by $\{t,x_2,\ldots,x_n\}$. This fact yields a term
$s$ such that $s^{\m f}(t,x_2,\ldots,x_n)=x_1$.
The relation $s^{\m f}(t^{\m f}(x_1,x_2\ldots,x_n),x_2,\ldots,x_n)=x_1$,
which holds among the free generators
of $\m f$, implies that the identity
$$s(t(x_1,x_2,\ldots,x_n),x_2,\ldots,x_n)\approx x_1$$
holds throughout $\mathcal V$.
\cqed
\bigskip

Next we explain why Claim~\ref{main_clm} implies
that every finite algebra in $\mathcal V$
is $\langle 0,1\rangle$-minimal.
Choose $\m a\in {\mathcal V}_{\textrm{fin}}$
(= the class of finite members of $\mathcal V$).
If $p(x)$ is a nonconstant unary polynomial of $\m a$,
then there exists a term $t(x_1,\ldots,x_n)$ which
depends on its first variable in $\mathcal V$ and a
tuple $\wec{a}=(a_2,\ldots,a_n)\in A^{n-1}$ such that
$p(x) = t^{\m a}(x,\wec{a})$. By Claim~\ref{main_clm},
there exists a term $s(x_1,\ldots,x_n)$ such that
$$
\mathcal V\models s(t(x_1,x_2,\ldots,x_n),x_2,\ldots,x_n)\approx x_1.
$$
From this we get
$s^{\m a}(t^{\m a}(x,\wec{a}),\wec{a})=x$, so
$p(x)= t^{\m a}(x,\wec{a})$ is a
left-invertible self-map of the finite set $A$.
This forces $p(x)$ to be a permutation of $A$,
proving that every nonconstant unary polynomial of $\m a$
is a permutation of $A$.

[(3)$\Rightarrow$(1)]
Assume that every finite algebra in $\mathcal V$ is
$\langle 0,1\rangle$-minimal.
According to the discussion at the beginning
of Section~\ref{decomp} and the results
of Lemmas~\ref{Gset_lm} and \ref{Vect_lm},
$\mathcal V$ is a Schreier variety.

[(3)$\Leftrightarrow$(4)]
We have already proved that (1), (2), and (3) are equivalent,
and now we argue that (4) is equivalent to them.
We first prove that (3)$\Rightarrow$(4).

\begin{clm} \label{3implies4}
Assume that $\mathcal{V}$ satisfies the hypotheses of Theorem~\ref{main_thm}.
If every finite algebra in $\mathcal{V}_{\textrm{fin}}$
is $\langle 0,1\rangle$-minimal, then $\mathcal{V}$
is generated by the finite algebra $\m f_{\mathcal{V}}(2)$.
\end{clm}

\cproof
If every finite algebra in $\mathcal{V}_{\textrm{fin}}$
is $\langle 0,1\rangle$-minimal, then the free algebras
in $\mathcal V$ have the structure described in Lemmas~\ref{Gset_lm}
and \ref{Vect_lm}. Assume first that 
the finite members of $\mathcal{V}$
are $\langle 0,1\rangle$-minimal of type {\bf 1}.
Let $\mathcal{W}=\mathsf{H}\mathsf{S}\mathsf{P}(\m f_{\mathcal{V}}(2))$
be the subvariety of $\mathcal{V}$ that is generated by
$\m f_{\mathcal{V}}(2)$. Since $\mathcal{W}$ is a subvariety of
$\mathcal{V}$, every identity that holds in
$\mathcal{V}$ will hold in $\mathcal{W}$.
But since $\mathcal{V}$ is essentially unary, every
identity $s\approx t$ can depend on at most
two variables relative to $\mathcal{V}$,
so every identity that can be refuted in
$\mathcal{V}$ can be refuted in
$\m f_{\mathcal{V}}(2)$, which belongs to $\mathcal{W}$.
This is enough to show that
$\mathcal{V}$ and $\mathcal{W}$ satisfy
the same identities, hence are equal.
This concludes the proof that
$\mathcal{V}=\mathsf{H}\mathsf{S}\mathsf{P}(\m f_{\mathcal{V}}(2))$
holds when the finite members of $\mathcal{V}$
are $\langle 0,1\rangle$-minimal of type {\bf 1}.

Now, assume that 
the finite members of $\mathcal{V}$
are $\langle 0,1\rangle$-minimal of type {\bf 2}.
According to Lemma~\ref{Vect_lm}, there is a finite
field $\mathbb{F}$ and a minimal subvariety $\mathcal{U}$
of $\mathcal{V}$
such that  $\mathcal{U}$ is term equivalent to the variety of
all idempotent reducts of $\mathbb{F}$-spaces (Case~(1) of
Lemma~\ref{Vect_lm}), or 
$\mathcal{U}$ is term equivalent to the variety of
all $\mathbb{F}$-spaces (Case~(2) of
Lemma~\ref{Vect_lm}). In either Case, we proved
that, for any $\kappa$, we have
\[
\tag{$\ddagger$}
\m f_{\mathcal{V}}(\kappa)\cong \m f_{\mathcal{U}}(\kappa)
\times \m f_{\mathcal{V}}(i)
\]
where $i=1$ in Case~(1) and $i=0$ in Case~(2).
Since $\mathcal{U}$ is a minimal variety,
it is generated by any of its nontrivial members,
for example the algebra $\m f_{\mathcal{U}}(2)$.
Thus, we have
\begin{itemize}
\item $\m f_{\mathcal{V}}(2)$ generates $\m f_{\mathcal{U}}(2)$ and
  $\m f_{\mathcal{V}}(i)$ (read the isomorphism of ($\ddagger$)
  from left to right).
\item  $\m f_{\mathcal{U}}(2)$ and
  $\m f_{\mathcal{V}}(i)$ generate
  $\m f_{\mathcal{U}}(\kappa)$ and
  $\m f_{\mathcal{V}}(i)$ for any $\kappa$
  (ignore $\m f_{\mathcal{V}}(i)$ and use the
  fact that $\mathcal{U}$ is a minimal variety).
\item   $\m f_{\mathcal{U}}(\kappa)$ and
  $\m f_{\mathcal{V}}(i)$ generate  $\m f_{\mathcal{V}}(\kappa)$
  for any $\kappa$ (read the isomorphism of ($\ddagger$)
  from right to left).
\end{itemize}
Altogether, this shows that $\m f_{\mathcal{V}}(2)$
generates $\m f_{\mathcal{V}}(\kappa)$ for every $\kappa$,
by which we mean that
$\m f_{\mathcal{V}}(\kappa)\in\mathsf{H}\mathsf{S}\mathsf{P}(\m f_{\mathcal{V}}(2))$ for every $\kappa$. Hence 
$\mathcal{V}=\mathsf{H}\mathsf{S}\mathsf{P}(\m f_{\mathcal{V}}(2))$
when 
the finite members of $\mathcal{V}$
are $\langle 0,1\rangle$-minimal of type {\bf 2}.
\cqed
\bigskip

Claim~\ref{3implies4} proves that (3)$\Rightarrow$(4)
in this theorem.

Now assume that (4) holds, so that
$\mathcal V$ is generated by a finite, abelian
$\langle 0,1\rangle$-minimal
algebra $\m a$.
$\m a$ is polynomially equivalent to a $G$-set or to a
vector space. The property
of being polynomially equivalent
to a vector space or $G$-set is preserved
under the formation of finite powers,
nonempty subalgebras, and homomorphic images,
so all algebras in ${\mathcal V}_{\textrm{fin}}=
\mathsf{H}\mathsf{S}\mathsf{P}_{\textrm{fin}}(\m a)$
are $\langle 0,1\rangle$-minimal. This proves that (3) holds.
\end{proof}

\section{Closing Remarks} \label{closing}

The chief contribution of this paper
is the complete classification of Schreier
varieties in the locally finite setting.
Many earlier papers classified Schreier
subvarieties of given, well-known varieties.
Those earlier results did not provide enough
of a pattern to predict the results of this paper,
but now that we have a transparent
description of the locally finite Schreier varieties,
it is not hard to see some points of contact
with earlier results.

As we mentioned in the introduction,
Schreier proved in \cite{schreier}
that any subgroup of a free group is free.
The complete classification of Schreier varieties
of groups was completed by 
Neumann and Wiegold in \cite{neumann-wiegold}.
Their theorem is

\begin{thm} \label{NMx}
The Schreier varieties of groups are just 
\begin{enumerate}
\item[$(1)\hphantom{_p}$] the variety of all groups,
\item[$(2)\hphantom{_p}$] the variety of all abelian groups, and
\item[$(3)_p$] the varieties $\mathfrak{A}_p$ of abelian groups
  of exponent $p$ for each prime $p$.
\end{enumerate}
\end{thm}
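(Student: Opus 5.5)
Sufficiency is classical, and I would handle the three families separately. For $(1)$ this is Schreier's theorem from the introduction. For $(2)$ I would use the standard fact that a subgroup of a free abelian group $\mathbb{Z}^{(\kappa)}$ is free abelian: well-order the basis and build a basis of the subgroup by transfinite induction, using at each step that subgroups of $\mathbb{Z}$ are cyclic. For $(3)_p$ the variety $\mathfrak{A}_p$ is term equivalent to the variety of $\mathbb{F}_p$-vector spaces, with the group identity as the constant. So it is locally finite, every finite member is $\langle 0,1\rangle$-minimal of type $\mathbf{2}$, and Theorem~\ref{main_thm} (or Case~(2) of Lemma~\ref{Vect_lm}) shows that it is Schreier. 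Alternatively, every subspace of a vector space is free.

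For necessity, let $\mathcal{V}$ be a nontrivial Schreier variety of groups, in the signature containing the constant $1$. The first step is to pin down $\m f_{\mathcal V}(1)=\langle x\rangle$, which is either $\mathbb{Z}$ or $\mathbb{Z}_m$. Suppose it is $\mathbb{Z}_m$ and $p\mid m$ is prime with $m\neq p$. Then $\langle x^p\rangle$ is a nontrivial cyclic group of order $m/p<m$. It is not free of rank $0$, since it is nontrivial, and not free of rank $\geq 1$, since it has fewer than $m$ elements. This contradicts the Schreier property, so $\mathcal{V}$ has exponent $0$ or a prime $p$.

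The second step is to show that free groups of $\mathcal{V}$ have cyclic abelian subgroups whenever $\mathcal{V}$ is not abelian. Any abelian subgroup of $\m f_{\mathcal V}(\kappa)$ is free in $\mathcal{V}$. A free group of rank $\geq 2$ in a nonabelian $\mathcal{V}$ is nonabelian, so that subgroup has rank $\le 1$ and is cyclic. In exponent $p$ this is impossible unless $\mathcal{V}$ is abelian. Indeed, in $\m f_{\mathcal V}(2)$ the subgroup $\langle x,[x,y]\rangle$ is free; any $\mathcal{V}$-free group of rank $\ge 1$ surjects onto $\mathbb{Z}_p$. I would aim to show that if $\mathcal{V}$ is a proper nonabelian variety of exponent $p$, the growth of $|\m f_{\mathcal V}(n)|$ forces some subgroup of intermediate order to exist; this mirrors the counting in Claim~\ref{main_clm}. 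Once $\mathcal{V}$ is abelian, the subvarieties of abelian groups are $\mathfrak{A}$ and $\mathfrak{A}_m$, and the first step leaves exactly $(2)$ and $(3)_p$.

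The main obstacle is the torsion-free case: showing that a proper nonabelian variety $\mathcal{V}$ is not Schreier. My first attempt would be a Schreier-index rank count. Let $H$ be the kernel of $\m f_{\mathcal V}(2)\to\mathbb{Z}_2$ sending $x\mapsto 1$ and $y\mapsto 0$. Then $H$ is generated by $x^2, y, xyx^{-1}$, so $H$ is free of rank $r\le 3$. Abelianizing, a $\mathcal V$-free group of rank $r$ has abelianization $\mathbb{Z}^r$, and since $H$ has finite index its abelianization rank is at least $2$. I would compare this with the rank computed from $H$ being a finite-index subgroup, trying to derive that $\mathcal{V}$ obeys the Nielsen--Schreier index formula $r-1=2(2-1)$. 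Iterating over subgroups of index $k$ would then show that $\m f_{\mathcal V}(2)$ contains free subgroups of every finite rank. From this I would try to deduce that $\m f_{\mathcal V}(2)$ satisfies no nontrivial law, hence $\mathcal{V}$ is the variety of all groups. If the counting does not close, the fallback is to use commutative transitivity (from the second step) together with a nontrivial law to embed a non-free subgroup, such as a nontrivial abelian non-cyclic subgroup, in a relatively free group.
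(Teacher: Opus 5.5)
The paper does not prove this theorem. It is quoted from Neumann and Wiegold, and the paper's own machinery only recovers its locally finite part. By Theorem~\ref{main_thm}, the finite members of a locally finite Schreier variety of groups are $\langle 0,1\rangle$-minimal. A group has a Maltsev term, so a nontrivial group cannot be polynomially equivalent to a $G$-set, and this forces $\mathfrak{A}_p$.

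Your sufficiency argument, the exponent reduction, and the observation that abelian subgroups of free groups in a nonabelian Schreier variety are cyclic are all correct. The genuine gap is the rest of necessity: showing that a nonabelian Schreier variety of groups that is not locally finite must be the variety of all groups. That is the real content of the Neumann--Wiegold theorem, and your proposal only describes what you would try there.

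\emph{Exponent $p$.} The counting modeled on Claim~\ref{main_clm} needs $|\m f_{\mathcal V}(n)|<\infty$, but a variety of prime exponent need not be locally finite: by Novikov--Adian, $B(2,p)$ is infinite for large primes $p$. In the locally finite case no counting is needed. A nontrivial central element of the finite $p$-group $\m f_{\mathcal V}(2)$, together with any element outside its span, generates a non-cyclic abelian subgroup, contradicting your second step. In the non-locally-finite case your second step gives nothing. By Adian's work, abelian subgroups of $B(m,p)$ for large $p$ are cyclic of order $p$, hence free in $\mathfrak{B}_p$. So any non-free subgroup there must be nonabelian, and you offer no way to find one.

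\emph{Exponent $0$.} The abelianization count cannot produce the index formula by itself. Let $\Phi_2$ be the absolutely free group on $x,y$, $V(\Phi_2)$ its $\mathcal V$-verbal subgroup, and $\tilde H$ the preimage of $H$. Then $H/H'\cong \tilde H/\tilde H'V(\Phi_2)$, and torsion-freeness of $H/H'$ leaves exactly two possibilities. Either $V(\Phi_2)\le \tilde H'$ and $r=3$, or $[x,y]\in \tilde H'V(\Phi_2)$ and $r=2$. The value $r=2$ actually occurs, for $\mathfrak A$, so some input that uses nonabelianness is required, and none is supplied.

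Your endgame also does not follow as stated. Having free subgroups of every finite rank inside $\m f_{\mathcal V}(2)$ would only show that $\m f_{\mathcal V}(2)$ generates $\mathcal V$; it would not show that $\mathcal V$ has no laws. What would suffice is the index formula for \emph{every} finite-index subgroup of $\m f_{\mathcal V}(2)$. That is equivalent to $V(\Phi_2)\le K'$ for every finite-index $K\ge V(\Phi_2)$. Iterating this along the $p$-derived series of $\Phi_2$, whose intersection is trivial, gives $V(\Phi_2)=1$. Proving that formula is exactly the missing step, and the commutative-transitivity fallback in your last sentence is not an argument.
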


Observe that the only varieties in Theorem~\ref{NMx}
that are locally finite are those in Item~$(3)_p$.
The variety $\mathfrak{A}_p$
in Item~$(3)_p$ consists of groups that are polynomially
equivalent to vector spaces over the $p$-element field.
Moreover, the varieties $\mathfrak{A}_p$
are exactly the nontrivial varieties
of groups that consist of algebras that are polynomially
equivalent to vector spaces.
In this way, one might consider
that this result foreshadows part of our theorem.
\bigskip

Evans classified the Schreier varieties of semigroups in
\cite{evans}. His theorem is:

\begin{thm} \label{TE}
The only Schreier varieties of semigroups are
\begin{enumerate}
\item the variety $Z_l$ of left-zero semigroups, defined by $xy = x$,
\item the variety $Z_r$ of right-zero semigroups, defined by $xy = y$,
\item the variety $C$ of constant semigroups, defined by $xy = zt$.
\item the varieties $A_p$ of abelian groups
  satisfying $x^p = 1$, $p$ prime.
\end{enumerate}  
\end{thm}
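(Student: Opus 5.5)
Since Theorem~\ref{TE} is Evans' result rather than a consequence of Theorem~\ref{main_thm}, the plan has two parts. First I would reduce to the locally finite situation. Then I would read off the answer from the classification of Section~\ref{main}.

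One preliminary issue concerns conventions. Items~(3) and~(4) of Theorem~\ref{TE} have pseudoconstants: $xy\approx zt$ makes $xy$ constant, and in $A_p$ the term $x^p$ is constant. The semigroup signature has no constant symbol. So, by Section~\ref{FV(0)}, these varieties are \emph{not} Schreier under the conventions of this paper. I would therefore interpret Theorem~\ref{TE} in Evans' convention, where empty algebras are discarded and ``free'' means free of positive rank. Under that convention the argument of Section~\ref{FV(0)} no longer applies. The pseudoconstant subsemigroup $P_{\m f}$ is then a one-element subsemigroup, and it is free of rank $1$ exactly when $\m f_{\mathcal V}(1)$ is trivial, which happens in $C$ and $A_p$.

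\textbf{Sufficiency.} I would verify the four families directly, in the style of Lemma~\ref{Gset_lm}. In $Z_l$, $Z_r$ and $C$, the free semigroup on $X$ is $X$, respectively $X$, respectively $X\cup\{0\}$. Every subsemigroup is generated by its intersection with $X$, except for $\{0\}$, which is free of rank one in $C$. For $A_p$, free objects are $\mathbb F_p$-vector spaces. Subsemigroups are subgroups because the exponent is finite, and these are again free.

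\textbf{Necessity.} Let $\mathcal V$ be a nontrivial Schreier semigroup variety, and work inside $\m f=\m f_{\mathcal V}(x)$.

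The first step forces periodicity. The subsemigroup $\langle x^2,x^3\rangle$ is free. If it has rank $2$, the relation $(x^2)^3=(x^3)^2$ yields $\mathcal V\models y^3\approx z^2$. Hence $x^2$ is a pseudoconstant, and $\mathcal V$ is a nil-type variety. If it has rank $1$, it is generated by some $x^k$, and comparing exponents gives $\mathcal V\models x\approx x^{j}$ for some $j\ge 2$. Then $\mathcal V$ is completely regular of bounded exponent.

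In the nil-type case I would test the $2$-generated subsemigroups $\langle xy,yx\rangle$ and $\langle x,xy\rangle$ of $\m f_{\mathcal V}(x,y)$. Demanding freeness should force $xy\approx zt$, which gives $C$.

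In the completely regular case I would split according to whether $\mathcal V$ satisfies the group identities. If it consists of groups, then subsemigroups coincide with subgroups because the exponent is finite. So $\mathcal V$ is a Schreier variety of groups, and Theorem~\ref{NMx} together with periodicity gives $A_p$. Otherwise, the idempotents $x^{j-1}$ generate a band subvariety, or $\mathcal V$ contains a nontrivial band. I would rule out semilattices and rectangular bands with explicit non-free subsemigroups. In a semilattice, $\langle x,xy\rangle$ has two elements and cannot be free of rank $2$, since $\m f(2)$ has three elements, nor of rank $1$, since $\m f(1)$ has one. In the free rectangular band, the subsemigroup $\{xy\}$ is a one-element subsemigroup while $\m f(1)$ is ... here I would use $\langle xy,yx\rangle$ and a similar size count instead. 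The remaining band varieties are $Z_l$ and $Z_r$. Mixtures of groups with bands would be excluded by applying the rank-counting argument of Claim~\ref{main_clm} to $\langle x, x^{j-1}y\rangle$.

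\textbf{Main obstacle.} The hard step is the completely regular case. The variety need not be locally finite a priori, because of Burnside groups, so Theorem~\ref{main_thm} cannot be invoked directly. The decisive move is to use Claim~\ref{main_clm}-style invertibility: for example, if $xy$ depends on $x$, then $x$ lies in $\langle xy,y\rangle$. I would try first to show that every such $\mathcal V$ is either a group variety or a band variety, and only then apply Theorem~\ref{NMx} or the finite band analysis.
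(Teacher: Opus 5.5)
The paper gives no proof of Theorem~\ref{TE}; it only quotes Evans. So your outline has to stand on its own, and as written it has genuine gaps.

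\textbf{Conventions and the nil case.} Your convention is misidentified. $\m f_{C}(1)=\{x,x^2\}$ has two elements ($x^2$ is the zero), and $\m f_{A_p}(1)\cong\mathbb{Z}_p$. So neither is trivial, and $\{0\}\le\m f_C(\kappa)$ is \emph{not} free of rank one. This is exactly Evans' remark quoted after the theorem. His convention is that only subsemigroups with more than one element must be free; singletons are simply exempt. Under the convention you state (every nonempty subsemigroup is free of positive rank), $C$ and $A_p$ are not Schreier, so your sufficiency argument is false for them as written.

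The same slip breaks your periodicity step. The branch ``$\langle x^2,x^3\rangle$ free of rank $2$'' is vacuous: $y^3\approx z^2$ specializes to $x^3\approx x^2$, which makes $\langle x^2,x^3\rangle$ a singleton. The branch that actually produces $C$ is $x^2=x^3\neq x$, where $\langle x^2,x^3\rangle$ is an exempt singleton, and you never treat it. The nil analysis itself (``should force $xy\approx zt$'') is also unargued. You still need $x^2\approx y^2$, then that this element is a zero, and only then a size count on $\langle x,xy\rangle$.

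\textbf{Bands and the mixture step.} Showing that semilattices and rectangular bands are not Schreier says nothing about larger band varieties (normal bands, all bands, \dots). The Schreier property does not pass to subvarieties: all groups form a Schreier variety, abelian groups of exponent $4$ do not. Your fallback $\langle xy,yx\rangle$ also fails for rectangular bands: it is the whole free rectangular band on $\{x,y\}$, freely generated by $\{xy,yx\}$.

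Here you can use the paper directly. Band varieties are locally finite and have no pseudoconstants, and Evans' convention agrees with the paper's because $\m f_{\mathcal V}(1)$ is trivial. So Theorem~\ref{main_thm} applies. Type $\mathbf 2$ would need a Maltsev term, which is impossible for nontrivial bands. Hence $\mathcal V$ is essentially unary, and since $x$ is the only unary term of a band, $xy\approx x$ or $xy\approx y$.

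Most seriously, your mixture step invokes Claim~\ref{main_clm}, whose proof uses finiteness of $\m f_{\mathcal V}(n)$ through the count $|\m f_{\mathcal V}(n-1)|<|\m t|\le|\m f_{\mathcal V}(n)|$. You note yourself that local finiteness is unavailable, and invertibility genuinely fails without it. In the Schreier variety of groups, $x^2$ depends on $x$, but $x\notin\langle x^2,y\rangle$.

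A substitute that works: if $\mathcal V$ is not a band variety, then no $\m f_{\mathcal V}(\kappa)$ with $\kappa\ge 1$ is generated by idempotents. To see this, send every free generator to $x\in\m f_{\mathcal V}(1)$. Idempotents go to the at most one idempotent of the monogenic $\m f_{\mathcal V}(1)$, and that idempotent is not $x$. Consequently:
\begin{itemize}
\item $\langle x^{j-1},y^{j-1}\rangle$ is a singleton, so $\mathcal V\models x^{j-1}\approx y^{j-1}$.
\item This element is a two-sided identity, and $y^{j-2}$ inverts $y$, so $\mathcal V$ consists of groups. Your appeal to Theorem~\ref{NMx} then finishes.
\item The same observation gives $x^2\approx y^2$ in the nil case.
\end{itemize}
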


Recall that the variety of semigroups has
no zeroary symbols, so by our convention concerning
constants versus pseudoconstants a Schreier variety
of semigroups should have no pseudoconstants.
Evans chooses instead to
call a variety of semigroups ``Schreier'' if
whenever $|\m s|>1$ and $\m s$ is a
subsemigroup of a free semigroup, then $\m s$
is free. This allows a Schreier variety of semigroups
to have at most one pseudoconstant.
Evans explains his convention as follows:
\smallskip

\begin{quote}
{\bf Remark.} {\it There is a particular reason for inserting the
    word \emph{non-trivial} in
our definition of Schreier variety. Without it we would be forced to classify
the variety of constant semigroups $C$ as non-Schreier because any free
semigroup $F_k(C)$ in this variety contains an idempotent element and this
one-generator subsemigroup is not $F_1(C)$.
However, all non-trivial subsemigroups
of $F_k(C)$ are free in $C$. What is worse,
we would also have to classify the varieties
$A_p$ as non-Schreier since again as a semigroup,
$F_k(A_p)$ contains a one-element
subsemigroup. This difficulty is avoided in groups,
of course, by regarding one-element groups as
free groups on an empty set of generators.}
\end{quote}
\smallskip

Somewhat surprisingly, all of the varieties
identified by Evans are locally finite.
Moreover, his examples which have a pseudoconstant
and no constant can be adapted by expanding by
a constant symbol so that they meet our definition
of Schreier.
The first three types
of varieties from Theorem~\ref{TE}
then become examples of varieties whose finite
members are polynomially equivalent to $G$-sets
for the group $G=\{1\}$.
The fourth example is the list of the varieties of semigroups
whose finite members are polynomially equivalent
to vector spaces over the $p$-element field
for appropriate $p$.
\bigskip

It is proved by Shirshov in \cite{shirshov} that, if $\m f$ is a field,
then the variety of all Lie algebras over $\m f$
is a Schreier variety. The full classification
of Schreier subvarieties of Lie algebras over $\m f$ was given by 
Za\u{\i}cev in \cite{zaitsev}. His theorem is:

\begin{thm} \label{NM}
  Let $\m f$ be a field. The nontrivial
  Schreier varieties of Lie algebras over $\m f$ are 
\begin{enumerate}
\item the variety of all Lie algebras over $\m f$, and
\item the variety of all abelian Lie algebras over $\m f$.
\end{enumerate}
\end{thm}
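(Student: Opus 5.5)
Since this theorem concerns Lie algebras over an arbitrary (possibly infinite) field, Theorem~\ref{main_thm} does not apply. I would prove it directly, in two directions.

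\emph{Both varieties are Schreier.} For the variety of all Lie algebras over $\m f$, this is the Shirshov--Witt theorem quoted just above from \cite{shirshov}. For the abelian Lie algebras, the variety is term equivalent to the variety of $\m f$-vector spaces, and there every subspace of a free algebra is itself a vector space, hence free.

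\emph{No other nontrivial variety is Schreier.} Let $\mathcal V$ be a nontrivial Schreier variety of Lie algebras that is neither abelian nor the variety of all Lie algebras. The goal is a contradiction.
\begin{enumerate}
\item Work in $\m F:=\m f_{\mathcal V}(x,y)$. Since $\mathcal V$ is not abelian, $[x,y]\neq 0$ in $\m F$.
\item Let $I$ be the kernel of the retraction $\m F\to\m f_{\mathcal V}(y)=\m f y$ sending $x\mapsto 0$ and $y\mapsto y$. This $I$ is a subalgebra, so by the Schreier property it is $\mathcal V$-free, say on a set $Z$.
\item Everything is graded by degree in $x$ and in $y$. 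I would argue that $Z$ can be taken bihomogeneous. Indeed, a free generating set of a free algebra in a variety of Lie algebras projects onto a basis of $I/[I,I]$. Moreover $I/[I,I]$ is spanned by the images of the elements $x_n:=[x,y,\ldots,y]$ (with $n$ copies of $y$), since $I$ is generated as a subalgebra by these, in analogy with Lazard elimination.
\item Compare dimensions of bihomogeneous components. Since $\m F=I\oplus\m f y$ as vector spaces, the Hilbert series of $\m F$ equals that of $\m f_{\mathcal V}(Z)$ (with $Z$ graded as in step~3) plus $t_y$. Both sides are computed from the same function: the dimensions of multihomogeneous components of free $\mathcal V$-algebras.
\item Let $m$ be the least total degree in which $\mathcal V$ satisfies an identity not following from anticommutativity and the Jacobi identity. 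Such an $m$ exists because $\mathcal V$ is proper. It exceeds $2$ because $\mathcal V$ is not abelian. When the field is finite I would also need to handle non-multilinear identities, so I would work with the multihomogeneous components directly.
\item Extract a contradiction from step~4 in low degree. The $x$-degree-$1$ part of $\m f_{\mathcal V}(Z)$ is spanned by the generators $x_n$, while the free Lie algebra has these linearly independent. So in $x$-degree $1$, the Hilbert series identity says that the $x_n$ must remain independent in $\m F$ for all $n$; in particular there is no identity of the form $(\operatorname{ad} y)^k x=0$, i.e.\ no Engel identity. In higher $x$-degree, the identity forces the relations of $\mathcal V$ among words in the $x_n$ to mirror exactly the relations among words in $x$ and $y$.
\item A minimal identity of degree $m$, evaluated at suitable $x_n$ with large $n$, produces an element of $I$. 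I expect this element to vanish in $\m f_{\mathcal V}(Z)$ but, after substituting $x_n=[x,y,\ldots,y]$, to expand into a consequence of lower-degree information in $\m F$ that is not forced there. Equivalently, the dimension count in that bidegree comes out strictly different on the two sides.
\end{enumerate}

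\emph{Main obstacle.} The hard part is step~7: showing rigorously that the Hilbert series equation of step~4 is incompatible with any nontrivial minimal identity. This is where Za\u{\i}cev's actual work lies. My first attempt would be to treat the smallest $x$-degree in which $\m F$ deviates from the free Lie algebra and show that both sides of the step~4 equation must deviate there by different amounts. For finite fields, where identities need not be multilinear, I would compare the components of each multidegree separately instead of using linearization.
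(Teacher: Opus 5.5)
The paper gives no proof of this statement; it quotes Za\u{\i}cev's theorem as a closing remark, so your outline has to stand on its own.

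\textbf{Verdict.} The first direction is fine: Shirshov--Witt covers all Lie algebras, and abelian Lie algebras are just $\m f$-vector spaces. The converse has a genuine gap. Step~7 carries the whole theorem and is stated only as an expectation.

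\textbf{Step 7 points the wrong way.} An identity of $\mathcal V$ evaluated at any elements, the $x_n$ included, is $0$ in $\m f_{\mathcal V}(Z)$ and in $\m F$ alike. So no discrepancy can come from evaluating the minimal identity. What a proof needs is the reverse:
\begin{enumerate}
\item a Lie polynomial $g$ that is \emph{not} an identity of $\mathcal V$ (for example, of degree $<m$);
\item elements $z_1,\dots,z_r$ that freely generate a subalgebra of $\m F$ (for example, part of a free generating set of $I$);
\item with $g(z_1,\dots,z_r)=0$ in $\m F$, this vanishing being forced by an identity evaluated at $x,y$.
\end{enumerate}
Steps~1--4 never use the identity, and you give no recipe for producing such a $g$ from an arbitrary minimal identity. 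That recipe is the missing idea.

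\textbf{Step 6 is wrong.} Every element of $[I,I]$ has $x$-degree $\ge 2$. So the members of $Z$ of bidegree $(1,n)$ are simply a basis of $\m F_{(1,n)}=\operatorname{span}\{x_n\}$. The Hilbert series equation in $x$-degree~$1$ therefore reads $\dim\m F_{(1,n)}=\dim\m F_{(1,n)}$. It forces no independence of the $x_n$ and excludes no Engel identity; if $x_n=0$, then $Z$ just has no generator in that bidegree.

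Contradictions appear only in $x$-degree $\ge 2$. For instance, suppose $[x,y,y]\approx 0$ holds. Then $x$ and $[x,y]$ are linearly independent, since $\mathcal V$ is nonabelian. Yet $[x,[x,y]]=[y,x,x]=0$, so they generate a $2$-dimensional abelian subalgebra, and no $\mathcal V$-free algebra has that form. This is exactly a $g$ of the kind described above: $g=[z_1,z_2]$, of degree $2<3$. The real work is finding such a $g$ for every minimal identity.

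\textbf{Steps 3--4 fail over finite fields.} They presuppose that the identities of $\mathcal V$ are closed under taking multihomogeneous components, so that $\m F$ is bigraded. This holds over infinite fields but not over finite ones. Take the variety generated by the $2$-dimensional nonabelian Lie algebra over $\mathbb{F}_q$. Here $\m F$ is finite and not nilpotent, so it admits no grading with $x,y$ in positive degree. There are then no multihomogeneous components to compare, and your proposed fix for finite fields is unavailable.

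Locally finite varieties over finite fields are handled by Theorem~\ref{main_thm}: a finite $\langle 0,1\rangle$-minimal Lie algebra has an affine, hence zero, bracket. The remaining finite-field varieties would still need a separate argument.
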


Observe that the variety of abelian Lie algebras over $\m f$
satisfies the identity $[x,y]\approx 0$, and $0$ is a constant
of the language, so we can delete the
Lie bracket from the signature
without changing the variety up to
term equivalence. Hence, the variety of abelian Lie
algebras over $\m f$
consists of algebras that are polynomially
equivalent to $\m f$-vector spaces.
This variety will be locally finite if
$\m f$ is a finite field.
Thus, we might also accept this theorem
as providing a hint about the classification of Schreier
varieties in the locally finite setting.
\bigskip

In \cite{skornyakov}, Skornyakov classified Schreier varieties of
$M$-sets ( = sets equipped with an action of a fixed monoid $M$).
A variety of $M$-sets has no constants,
but Skornyakov allows pseudoconstants.
If we rephrase his theorem in a way that accords
with our convention that a Schreier variety with
no constants should have no pseudoconstants, then we get

\begin{thm} \label{NM}
Let $M$ be a nontrivial monoid. The 
variety $\mathcal{V}_M$ of $M$-sets is Schreier
if and only if the following hold:
\begin{enumerate}
\item any principal left ideal of $M$ has a right-cancellative generator,
\item any two incomparable left ideals of $M$ are disjoint, and
\item $M$ satisfies the ACC on principal left ideals.
\end{enumerate}
\end{thm}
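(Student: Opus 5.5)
The plan is to describe the subalgebras of free $M$-sets explicitly and reduce the Schreier property to a statement about left ideals of $M$.

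\emph{Structure of free $M$-sets.} The free $M$-set over $X$ is $\m f_{\mathcal V_M}(X)=X\times M$ with action $m\cdot(x,n)=(x,mn)$, a disjoint union of copies $\{x\}\times M$ of the left regular $M$-set. There are no pseudoconstants: if $m\cdot x\approx m\cdot y$ held, then $(x,m)=(y,m)$ in $\m f(\{x,y\})$, which is false. So the conventions of Section~\ref{FV(0)} are met. A sub-$M$-set of $X\times M$ is exactly $\bigsqcup_x\{x\}\times I_x$ with each $I_x$ a (possibly empty) left ideal of $M$. A disjoint union of free $M$-sets is free, and each $I_x$ is itself a subalgebra of $\m f(1)=M$. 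Hence $\mathcal V_M$ is Schreier if and only if every left ideal of $M$ is free. Moreover, a cyclic $M$-set $Ms$ is free on $s$ exactly when $a\mapsto as$ is injective, i.e.\ when $s$ is right-cancellative. A free $M$-set is the disjoint union of its maximal cyclic subsets $My$ ($y$ a free generator), and any cyclic subset $Ma$ lies inside the unique $My$ containing $a$. Thus a left ideal $I$ is free iff
\[
I=\bigsqcup_k Ms_k \quad\text{with each $s_k$ right-cancellative,}
\]
and in that case every $Ma\subseteq I$ lies inside a single $Ms_k$.

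\emph{Schreier implies (1)--(3).} For (1), write $Mm=\bigsqcup_k Ms_k$. Then $m\in Ms_k$ for some $k$, so $Mm\subseteq Ms_k\subseteq Mm$; thus $Mm=Ms_k$ with $s_k$ right-cancellative. For (2), in the principal case, apply the decomposition to $I=Ma\cup Mb$. If $Ma$ and $Mb$ lie in different components they are disjoint. Otherwise both lie in one component $Ms$, and $s\in Ma$ or $s\in Mb$ forces $Ma\cup Mb$ to equal $Ma$ or $Mb$, so the two ideals are comparable. For (3), take a chain $Ma_1\subseteq Ma_2\subseteq\cdots$ with union $I=\bigsqcup_k Ms_k$. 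All the $Ma_n$ lie in the component containing $a_1$, say $Ms_k$. Since $s_k\in Ma_n$ for some $n$, the chain stabilizes at $n$.

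\emph{(1)--(3) imply Schreier.} Let $I$ be a left ideal. By ACC (3), each $a\in I$ lies in a maximal principal left ideal contained in $I$; this uses a standard dependent-choice argument. Distinct maximal ones are incomparable, hence disjoint by (2). By (1) each has a right-cancellative generator, so $I=\bigsqcup Ms_k$ is free. By the reduction in the first paragraph, $\mathcal V_M$ is Schreier.

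\emph{Main obstacle.} The delicate point is the reading of condition (2). For non-principal ideals it seems too strong: two unions $Ma\cup Mb$ and $Mb\cup Md$ of pairwise disjoint principal ideals are incomparable but intersect. Both directions of the argument only need (2) for principal left ideals, so I would prove the theorem with (2) read as ``principal left ideals.'' I would also double-check the component argument, namely that in a free $M$-set any cyclic subset lies in a unique maximal cyclic subset.
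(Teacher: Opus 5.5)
The paper does not prove this theorem. It quotes it from Skornyakov and uses it only for finite $M$, where the argument in the closing remarks relies solely on condition (1). So there is no route to compare against, and judged on its own your argument is correct.

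\textbf{What works.} The reduction to ``every left ideal of $M$ is a free $M$-set'' is sound: it uses $X\times M$ as the free $M$-set and the fact that coproducts of free $M$-sets are free. The description of free left ideals as $\bigsqcup_k Ms_k$ with each $s_k$ right-cancellative is also sound, as are your three forward arguments. The component fact you wanted to double-check holds. A cyclic subset $Ma$ of $Y\times M$ lies in the block $\{y\}\times M$ containing $a$, and each such block is itself cyclic. The converse is fine too: a maximal element of $\{Mb : a\in Mb\subseteq I\}$, which exists by ACC, is automatically maximal among all principal left ideals contained in $I$.

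\textbf{On condition (2).} Your worry is justified, and you should state it as a correction rather than a suspicion. Read literally, the forward implication is false. Take $M$ to be the free monoid on $\{a,b\}$, so $\mathcal V_M$ is the variety of all unary algebras with two basic operations.
\begin{itemize}
\item Every element of $M$ is right-cancellative.
\item $Mu\cap Mw\neq\emptyset$ if and only if one of $u,w$ is a suffix of the other, that is, if and only if $Mu$ and $Mw$ are comparable.
\item ACC on principal left ideals holds, by word length.
\end{itemize}
So $\mathcal V_M$ is Schreier by your criterion. However, $Maa$, $Mba$ and $Mb$ are pairwise disjoint. Hence $Maa\cup Mb$ and $Mba\cup Mb$ are incomparable left ideals that intersect.

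Condition (2) must therefore be read for principal left ideals. That is exactly what your proof establishes in both directions. This discrepancy does not affect the paper's use of the theorem, since its finite-$M$ deduction never invokes (2) in the forward direction.
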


Observe that if $M$ satisfies these conditions
and $a\in M$, then the left ideal
$Ma$ must have a right cancellative generator, say $a'$.
The statement that $a'$ generates
$Ma$ means that $Ma=Ma'$. The statement that
$a'$ is right-cancellative means
that the right multiplication
map $\rho_{a'}\colon M\to M\colon x\mapsto xa'$
is injective. Now, $\mathcal{V}_M$ will be locally finite
if and only if $M$ is finite. In this case,
$a'$ will be right-cancellative if and only if
$\rho_{a'}$ is a permutation of the finite set $M$,
which will hold 
if and only if the element
$a'$ has a finite power equal to $1\in M$.
This forces $a'$ to be a unit of $M$. Altogether,
when $M$ is finite, the right-cancellative
generator $a'\in Ma$ must be a unit, in which case
$Ma=Ma' = M$.
The conclusion is that
$M$ itself is the only principal left ideal of $M$.
This makes every element of $M$ left-invertible, hence
invertible, hence $M$ must be a (finite) group. When $M$ is a group
both Items (2) and (3) hold.
Thus, 
Skornyakov's Theorem proves that a locally finite
variety of $M$-sets is Schreier (in our sense) if and only
if it is a variety of $G$ sets for the finite group $G=M$.
Again, this earlier theorem might be considered
to foreshadow our main result.

\bibliographystyle{plain}

\end{document}